\newtheorem{theorem}[equation]{Theorem}
\newtheorem{prop}[equation]{Proposition}
\newtheorem{lemma}[equation]{Lemma}
\newtheorem{corollary}[equation]{Corollary}
\theoremstyle{definition}
\newtheorem{remark}[equation]{Remark}
\newtheorem{example}[equation]{Example}
\newtheorem{condition}[equation]{Condition}
\theoremstyle{plain}
\numberwithin{equation}{section}
\numberwithin{figure}{section}
\DeclareMathOperator{\torsion}{torsion}
\DeclarePairedDelimiter\abs{\lvert}{\rvert}
\def\a{\alpha}
\def\b{\beta}
\def\c{\gamma}
\def\d{\delta}
\def\e{\epsilon}
\def\C{\mathbb C}
\def\Q{\mathbb Q}
\def\R{\mathbb R}
\def\Z{\mathbb Z}
\def\dd{\textup{d}}
\def\ge{\geqslant}
\def\le{\leqslant}
\def\INTC{K_2^T(C)_{\textup{int}}}
\begin{document}

\title[Mahler measure of polynomials defining genus 2 and 3 curves]{Mahler measure of families of polynomials defining genus 2 and 3 curves}

\author{Hang LIU}
\address{School of Mathematics and Information Science, Shaanxi Normal University, Xi'an 710062, China}
\email{liuhang@snnu.edu.cn}

\author{Hourong Qin}
\address{Department of Mathematics, Nanjing University, Nanjing 210093, China}
\email{hrqin@nju.edu.cn}

\begin{abstract}
In this article, we study the Mahler measures of more than 500 families of reciprocal polynomials defining genus 2 and genus 3 curves. We numerically find relations between the Mahler measures of these polynomials with special values of $L$-functions. We also numerically discover more than 100 identities between Mahler measures involving different families of polynomials defining genus 2 and genus 3 curves.
Furthermore, we study the Mahler measures of several families of nonreciprocal polynomials defining genus 2 curves and numerically find relations between the Mahler measures of these families and special values of $L$-functions of elliptic curves. We also find identities between the Mahler measures of these nonreciprocal families and tempered polynomials defining genus 1 curves. We will explain these relations by considering the pushforward and pullback of certain elements in $K_2$ of curves defined by these polynomials and applying Beilinson's conjecture on $K_2$ of curves. We show that there are two and three explicit linearly independent elements in $K_2$ of certain families of genus 2 and genus 3 curves.
\end{abstract}

\keywords{Mahler measure; special values of $L$-functions; Beilinson's conjecture; genus 2 curves; genus 3 curves; computation}
\subjclass[2010]{11R06,19F27,11Y99}

\maketitle

\section{Introduction}
The (logarithmic) Mahler measure of a rational function $P \in \C[x_1^{\pm 1},\dots,x_n^{\pm 1}]$ is defined by
$$m(P):=\frac{1}{(2\pi i)^n}\int_{\mathbb{T}^n} \log|P(x_1,\dots,x_n)|\frac{dx_1}{x_1}\dots\frac{dx_n}{x_n}$$
where $\mathbb{T}^n = \{|x_1|=1\}\times \dots \times \{|x_n|=1\}$.

Deninger \cite{Den97} found connections between the Mahler measure of some polynomials and certain regulators which are expected to be related to the special value of $L$-functions of elliptic curves. Boyd \cite{Boy98} conducted a systematic numerical study of the Mahler measures of families of two-variable polynomials of genus 1 and some reciprocal polynomials of genus 2 and proposed many conjectures. Note that polynomial of $n$ variables is \emph{reciprocal} if
$$\frac{P(x_1,\dots,x_n)}{P(1/x_1,\dots,1/x_n)}$$
is a monomial $x_1^{b_1}\dots x_n^{b_n}$.
For example, he studied the Mahler measure $m(P_k(x,y))$ of
$$P_k(x,y)=(x+1)y^2+(x^2+(2-k)x+1)y+(x^2+x), \quad k \in \Z$$
and found $m(P_k)/L'(E,0) \in \Q^{\times}$ where $E:P_k(x,y)=0$ is (generically) an elliptic curve and $L'(E,0)$ is the derivative of its $L$-function $L(E,s)$ at $s=0$. These relations are only proved for $k=-4,2$ by Rodriguez-Villegas \cite{RV99}, for $k=-8,1,7$ by Mellit \cite{Me} and for $k=-2,4$ by Rogers and Zudilin \cite{RZ}.

Boyd also considered the Mahler measure of some families related to genus 2 curves. For example, the polynomial
$$Q_k(x,y)=y^2+(x^4+kx^3+2kx^2+kx+1)y+x^4, \quad k \in \Z$$
defines a curve $Z_k$ of genus 2 for $k \notin \{-1,0,4,8\}$. There are two interesting involutions $\sigma_1:(x,y)\mapsto (1/x,1/y)$ and $\sigma_2:(x,y)\mapsto (1/x,y/x^4)$ on $Z_k$. When $Z_k$ is of genus 2, the quotients curves $E_k=Z_k/\langle\sigma_1\rangle$ and $F_k=Z_k/\langle\sigma_2\rangle$ are elliptic curves over $\Q$ whose defining equations could be easily given. There is a natural embedding $Z_k\rightarrow E_k\times F_k$. The Jacobian of $Z_k$ is isogenous to $E_k\times F_k$. Boyd found numerically $m(Q_k)/L'(E_k,0) \in \Q^{\times}$ for $k < 4$ and $k \notin \{-1,0\}$. Bosman \cite{Bos04} studied this family in detail and proved the following relations for the cases $k=-1,2,8$:
\begin{align*}
&m(Q_2)=L'(E_{36},0), \quad \text{where } E_{36}:y^2=x^3+1 (\text{conductor } 36), \\
&m(Q_{-1})=2L'(\chi_{-3},-1) \text{ and } m(Q_8)=4L'(\chi_{-3},-1).
\end{align*}

Accidentally, the elliptic curve $E_k$ is birationally equivalent to $E:P_{k}(x,y)=0$. Since the Mahler measures of $Q_k$ and $P_{k}$ are related to the special values of $L$-functions of $E_k$ and $E$ respectively for suitable $k \in \Z$. One expects a relation between the Mahler measures of $Q_k$ and $P_{k}$. Boyd had the following conjecture
\begin{align}\label{eqn:Boydrelation}
m(Q_{k}) =
\begin{cases}
  2m(P_{k}) \quad 0\le k\le 4,\\
  m(P_{k}) \quad k \le -1,
\end{cases}
\end{align}
which is proved by Bertin, Zudilin \cite{BZ16} and Lalin, Wu \cite{LW} using different methods.

In this article, we extend Boyd's work in two directions. Firstly, we study the Mahler measure of several families of nonreciprocal polynomials defining genus 2 curves and numerically find relations between the Mahler measure of these families and families of polynomials defining genus 1 curves. For example, let
$$R_k(x,y)=x+\frac{1}{x}+y+\frac{1}{y}+(k-4)$$
be Deninger's family, then $R_{k}$ defines an elliptic curve birationally equivalent to $E_2=C/\langle\sigma_2\rangle$. We numerically find the following relation
\begin{align*}
m(Q_{k}(x-1,y)) \stackrel{?}{=}
\begin{cases}
  m(R_{k}) \quad   k\le -1,\\
  \frac{1}{2}(m(R_{k})+m(P_{k})) \quad  k \ge 17
\end{cases}
\end{align*}
where the symbol ``$\stackrel{?}{=}$'' indicates that the identity has been verified to at least 50 decimal places as throughout this article.
We will explain these relations assuming Beilinson's conjecture on $K_2$ of curves and prove that there are two explicit linearly independent elements in $K_2$ of certain genus 2 curves such as the curve defined by $Q_k$.
Secondly, we systematically study reciprocal families of polynomials defining genus 2 and genus 3 curves. We numerically find relations between the Mahler measures of these polynomials with special values of $L$-functions of certain elliptic curves and discover more than 100 identities between Mahler measures involving families of polynomials defining genus 2 and genus 3 curves as Boyd's conjecture \eqref{eqn:Boydrelation}. Let $d_f=L'(\chi_{-f},-1)$ where $\chi_{-f}$ is the real odd primitive character of conductor $f$. We give three new conductors $f=23, 303,755$ of Chinburg's conjecture \cite{Ch}: for any conductor $f$, there is a polynomial $P_f(x,y)$ with integer coefficients for which $m(P_f)/d_f \in \Q^{\times}$ (see Example~\ref{example:chinburg}). We will explain the philosophy behind these relations in terms of Beilinson's conjecture and the regulator theory.

This paper is organized as follows. Section \ref{section:preliminaries} presents the necessary background to understand the relations between the Mahler measures of polynomials and special values of $L$-functions and identities between the Mahler measures involving these polynomials. Section~\ref{section:genus2} and \ref{section:genus3} are devoted to genus 2 and genus 3 case respectively. In Section~\ref{section:genus2}, we consider the Mahler measure of both reciprocal and nonreciprocal polynomials defining genus 2 curves and prove the linear independence of two explicit elements in $K_2$ of certain genus 2 curves. In Section \ref{section:genus3}, we study the Mahler measures of reciprocal polynomials defining genus 3 curves. Section~\ref{section:computingmethod} briefly describe the computing method in this article.

\section{Preliminaries}\label{section:preliminaries}
\subsection{$K_2$ of curves and Beilinson's conjecture}
Bloch \cite{Bl78} considered CM-elliptic curves $E$ over $\Q$, and proved a relation between
a regulator associated to an element in $K_2(E)$ and the value of its $L$-function at 2.
In the meantime, Beilinson \cite{Be85} made very far reaching conjectures on the relation between special values of $L$-functions and regulators of certain $K$-groups of smooth projective varieties defined over number fields. However, computer calculations by Bloch and Grayson \cite{BG} showed that for $K_2$ of curves an additional condition should be added, which led to a modification of Beilinson's conjecture.

Let $ F $ be a field.
Matsumoto's theorem says that the group $K_2(F)$ can be described explicitly as
$$F^* \otimes_\Z F^* \slash \langle a\otimes (1-a), a\in F, a\neq 0,1 \rangle,$$
where $\langle \cdots \rangle$ denotes the subgroup generated by the indicated elements.
The class of $a \otimes b$ is denoted by $\{a,b\}$ and called the Steinberg symbol.

For a smooth projective geometrically irreducible curve $C$ defined over $\Q$ (similar definitions apply for curves over number fields),
the tame $K_2$ group $K_2^T(C)$ of $C$ is the subgroup of $K_2(\Q(C))$ such that the elements have trivial tame symbols. Note that $K_2^T(C)\otimes\Q=K_2(C)\otimes\Q$ by the localization sequence in $K$-theory. The integral tame $K_2$ group $ \INTC $ which is a subgroup of $K_2^T(C)$ plays a key role in the Beilinson's conjecture on $K_2$ of curves (for the definition, see \cite{LJ}). We call its elements \emph{integral}.

Beilinson's conjecture relates $K$-theory of varieties to special values of their $L$-functions via the so-called \emph{regulator}.
There is a well defined pairing between $K_2^T(C)$ and $ H_1(C(\C), \Z)$
giving us the \emph{regulator pairing}
\begin{equation} \label{eqn:pairing}
\begin{split}
\langle \,\cdot\, , \,\cdot\, \rangle : H_1(X;\Z) \times K_2^T(C)/\torsion \to \R
\\
(\c, \a) \mapsto
\frac{1}{2\pi}\int_{\c}\eta(\a)
\,,
\end{split}
\end{equation}
with $ \eta(\a) $ obtained by writing $ \a $ as a sum of symbols
$ \{a,b\} $, and mapping $ \{a,b\} $ to
\begin{equation} \label{eqn:eta}
\eta(\{a,b\}) = \log|a| \dd \arg(b) - \log|b| \dd \arg(a)
\,,
\end{equation}
and $\c$ is
chosen such that $\eta(\a)$ is defined.
Let $ H_1(C(\C), \Z)^+ $ and $ H_1(C(\C), \Z)^- $ be the part
of $ H_1(C(\C), \Z) $ which is invariant and anti-invariant under the action of complex conjugation on $ C(\C) $. It is easy to show that the regulator is trivial on the plus part. Hence we only need to consider the regulator on the minus part.
Suppose that $C$ has genus $g$, if $\c_1,\cdots,\c_g$ form a
basis of $H_1(C(\C);\Z)^-$, and $M_1,\cdots,M_g$ are in
$ K_2^T(C) $, then the regulator $R(M_1,\cdots,M_g)$ is defined by
\begin{equation}\label{eqn:regulator}
R=|\det(\langle \c_i, M_j \rangle)|.
\end{equation}
Beilinson's conjecture expects $ \INTC \otimes_\Z \Q $ to have $ \Q $-dimension
$ g $ and $ R \ne 0 $ if $ M_1,\dots, M_g $ form a basis of
it. Moreover, it expects $ R $ to be rationally proportional to $ L^{(g)}(C,0) $
(see, e.g., \cite[Conjecture~3.11]{Jeu06}).

\subsection{Transfer and restriction homomorphisms between $K_2$ groups}\label{subsection:pushpull} Transfer and restriction homomorphisms between $K$-groups are defined for various kinds of maps between varieties, but we only need the following very special case.
Let $\langle \sigma \rangle$ be an automorphism of order 2 of a curve $C$ and $f:C \rightarrow C/\langle \sigma \rangle$ be the quotient map of curves. There is a transfer homomorphism $f_*$ and a restriction homomorphism $f^*$ between the (integral) tame $K_2$ groups of $C$ and $C/\langle \sigma \rangle$. Suppose $M\in K_2^T(C)$, then
\begin{equation}\label{eqn:pushpullK2}
f^*f_*(M)=M\sigma(M)
\end{equation}
where $\sigma$ acts on $M$ by acting on the functions in the Steinberg symbol.

It is Bosman's insight in \cite{Bos04} that these maps allow us to pushforward the regulator integral from $C$ to $C/\langle \sigma \rangle$.
Let $\gamma \in H_1(C(\C);\Z)^-$. Then we have
\begin{equation}\label{eqn:pushforward}
\int_{\gamma}\eta(f^*f_*(M)) = \int_{f(\gamma)}\eta(f_*(M)).
\end{equation}
This is the key to understand the relations involving the Mahler measures of polynomials defining genus 2 and genus 3 curves in this article.

\subsection{Mahler measure and the regulator}\label{subsection:mahlerandregulator}
Let $C$ be the normalization of the projective closure of the algebraic curve defined by $P(x,y) \in \C[x,y]$, then
$\{x,y\}\in K_2^T(C)\otimes \Q$ is equivalent to $P$ being tempered, i.e., the roots of all the face polynomials of $P$ are roots of unity (see \cite{RV99}).
Denote the degree of $P$ in $y$ by $d$. Write
$$P(x,y)=a_0(x)\prod_{k=1}^d(y-y_k(x))$$
where $y_k(x),k=1,\ldots,d$ are $d$ solutions of $P(x,y)=0$ which maybe chosen to be continuous, piecewise analytic functions of $x$. By Jensen's formula with respect to the variable $y$, we have
\begin{align*}
m(P)-m(a_0(x))&=\frac{1}{(2\pi i)^2}\int_{\mathbb{T}^2} \sum_{k=1}^d\log|y-y_k(x)|\frac{dx}{x}\frac{dy}{y}\\
&=\frac{1}{2\pi i}\int_{\mathbb{T}^1}\sum_{k=1}^d\log^+|y_k(x)|\frac{dx}{x}
\end{align*}
where $\log^+\abs{u}:=\max(\log\abs{u},0)$.

In particular, if $d=2$ and $\abs{y_2(x)}\le 1$ as long as $\abs{x}=1$(this happens if $P$ is reciprocal). Then the above formula can be written as
\begin{align}\label{eqn:eqnmahlerreg}
\begin{split}
m(P)-m(a_0(x))&=\frac{1}{2\pi i}\int_{S}\log\abs{y_1(x)}\frac{dx}{x}.\\
&=\frac{1}{2\pi}\int_{S}\eta(\{x,y\}).
\end{split}
\end{align}
where $S=\{(x,y) : \abs{x}=1,\abs{y_1(x)}\ge1\}$. When $S$ can be seen as a cycle in $H_1(C,\Z)$, then we recover a regulator integral as \eqref{eqn:pairing}. Hence by Beilinson's conjecture, if $C$ is an elliptic curve and $\{x,y\} \in K_2^T(C)_{\textup{int}}\otimes \Q$, the Mahler measure is expected to be rationally proportional to $L'(C,0)$.

\section{Polynomials defining genus 2 curves}\label{section:genus2}
\subsection{Constructing the polynomials}\label{subsection:constructingg2}
Following Boyd's study on the Mahler measure of polynomials defining genus 2 curves in \cite{Boy98}, we consider three types of polynomials $P_k(x,y)$
\begin{align*}
&A(x)y^2+(B(x)+kx^2+kx)y+C(x),\\
&A(x)y^2+(B(x)+k(x^3+x)+lx^2))y+C(x),\\
&A(x)y^2+(B(x)+2a(x^5+x)+k(x^4+x^2)+lx^3)y+C(x).
\end{align*}

We require $P_k(x,y)$ to satisfy the following condition
\begin{condition}\label{condition:Pconditiong2}
$P_k(x,y)$ are tempered, reciprocal and define curves $Z_k$ generically of genus 2. The quotient curve of $Z_k$ by the automorphism $(x,y)\mapsto (1/x,1/y)$ is a curve generically of genus 1.
\end{condition}

$A(x)$ is 1 or a product of cyclotomic polynomials which is reciprocal or antireciprocal. Let $d=3,4$ and $6$ for the first, second and third type respectively. Then $\deg(A(x))\le d$ and $C(x)=x^{d}A(1/x)$.
For the first type $B(x)$ is equal to 0 or $x^3+1$ and for the second type $B(x)$ is equal to 0, $x^4+1$ or $2x^4+2$. For the third type, $\deg(A(x))=6$, $B(x)$ is equal to $2x^6+2$ and $a$ is the coefficient of $x$ in $A(x)$.

Let $B_k(x)$ be the coefficient of $y$ in $P_k(x,y)$ and
$\Delta_k(x)=B_k(x)^2-4A(x)C(x)$
which is reciprocal. For the third type, we have $x^2\mid \Delta_k(x)$ and $\Delta_k(x)/x^2$ is a reciprocal polynomial of degree 8. Let $D_k(x)$ be $\Delta_k(x)$ for the first and second type and be
$\Delta_k(x)/x^2$ for the third type.

Since we require $Z_k$ to be generically of genus 2, there must be a polynomial $f(x)\neq x$ of degree 1 such that $f(x)^2|D_k(x)$ for the second and third type. Since $D_k(x)$ is reciprocal, $\left(xf(1/x)\right)^2$ also divides $D_k(x)$, $f(x)$ must be $x\pm1$. Also $f(x) \nmid A(x)$, since otherwise $f(x)\mid B_k(x)$, $f(x)\mid P(x,y)$.

Completing the square, $P_k(x,y)=0$ can be written as $(2A(x)y+B_k(x))^2=\Delta_k(x).$
Substituting
$$X=\frac{x+1}{x-1}, Y=\frac{2A(x)y+B_k(x)}{x^{\e}f(x)(x-1)^3}$$
where we assume $f(x)=1$ for the first type and $\e=1$ for the third type and $\e=0$ otherwise, the equation reduces to the form $Q(X,Y)=Y^2-h(X^2)=0$ where $h$ is cubic. The automorphism
\begin{equation}\label{eqn:sigma}
\sigma:(x,y)\mapsto (1/x,1/y)
\end{equation}
on the curve defined by $P_k(x,y)$ is transformed to $(X,Y)\mapsto (-X,Y)$ on the curve defined by $Q(X,Y)$ if $f(x)=1$ or $x+1$ and transformed to $(X,Y)\mapsto (-X,-Y)$ if $f(x)=x-1$. So the quotient curve of $Z_k$ by the automorphism is generically an elliptic curve defined by $y^2=h(x)$ if $f(x)=1$ or $x+1$ and $y^2=h^*(x)=x^3h(1/x)$ if $f(x)=x-1$.

For the second and third type, if $\deg(A(x))$ is even, $A(x)$ must be reciprocal since otherwise $(x^2-1) \mid A(x), f(x)\mid A(x)$. If $\deg(A(x))$ is odd, we can assume $A(x)$ to be reciprocal by letting $x\rightarrow -x$, then $f(x)=x-1$ since otherwise $(x+1)\mid A(x)$ and $(x+1)\mid D_k(x)$ imply $(x+1) \mid P_k(x,y)$. So we can always assume $A(x)$ to be reciprocal for the second and third type.

For the second type, we have $f(x)^2\mid D_k(x)$, so $D(-f(0))=B_k(-f(0))^2-4A(-f(0))^2=0$, $B_k(-f(0))=\pm2A(-f(0))$ which gives $l=2f(0)k - B(1) \pm 2A(-f(0))$. On the other hand, if $l=2f(0)k - B(1) \pm 2A(-f(0))$, then $f(x)\mid D_k(x)$. Since $D_k(x)=x^8D(1/x)$, $D(-x)$ is also reciprocal, we can assume $f(x)=x-1$ by letting $x\rightarrow -x$. Then $D_k(x)/f(x)$ is anti-reciprocal, $f(x) \mid (D_k(x)/f(x))$. Hence we have $f(x)^2 \mid D_k(x)$. For the third type, we can get $l=2f(0)k + 4f(0) - 4a \pm 2A(-f(0))$ similarly.

Since we require $P_k(x,y)$ to be tempered, the degree of $A(x)$ must satisfy certain conditions. For example, if $B(x)=0$ for the first type, then $\deg(A(x))=2$ or $3$ since the lattice points corresponding to $kx^2y$ and $kxy$ must be the interior points of the Newton polygon.

We summarize above discussions in the following Proposition.
\begin{prop}\label{prop:constructiong2}
Let notations be as above, $\delta=\pm 1$ and $A(x)$ be reciprocal for the second and third type. Then $P_k(x,y)$ satisfies Condition \ref{condition:Pconditiong2} if $A(x)$ and $B(x)$ satisfy the following:
\begin{enumerate}
  \item for the first type, $x+1\nmid A$ and
  \begin{itemize}
    \item if $B(x)=0$, then $\deg(A)=2$ or $3$;
    \item if $B(x)=x^3+1$, then $\deg(A(x))=0,1$ or $2$;
  \end{itemize}
  \item for the second type, $f(x)\nmid A(x)$, $f(x)=x\pm1$ if $\deg(A(x))$ is even, $f(x)=x-1$ if $\deg(A)$ is odd, $l=2f(0)k - B(1) + \delta 2A(-f(0))$ and
  \begin{itemize}
    \item if $B(x)=0$, then $\deg(A(x))=3$ or $4$;
    \item if $B(x)=x^4+1$, then $\deg(A(x))=1,2,3$ or $4$;
    \item if $B(x)=2x^4+2$, then $\deg(A(x))=4$;
  \end{itemize}
  \item for the third type, $a$ is the coefficient of $x$ in $A(x)$, $\deg(A(x))=6$, $B(x)=2x^6+2$, $f(x)=x\pm1$, $f(x)\nmid A(x)$ and $l=2f(0)k + 4f(0) - 4a + \delta 2A(-f(0))$.
\end{enumerate}
\end{prop}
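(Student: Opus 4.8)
The plan is to check the four requirements packaged into Condition~\ref{condition:Pconditiong2} one at a time: that $P_k$ is reciprocal, that $Z_k$ is generically of genus $2$, that $Z_k/\langle\sigma\rangle$ is generically of genus $1$, and that $P_k$ is tempered. The first three are essentially delivered by the discussion preceding the statement, so I would present them as an assembly and reserve the real labour for temperedness. Reciprocity I would dispatch first by direct computation: in each of the three types a term-by-term inspection shows $x^{d}B_k(1/x)=B_k(x)$ (the $k$-, $l$- and $a$-monomials are placed symmetrically about degree $d/2$, and $B(x)\in\{0,\,x^{d}+1,\,2x^{d}+2\}$ is itself reciprocal of degree $d$), while $C(x)=x^{d}A(1/x)$ by construction. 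Multiplying $P_k(1/x,1/y)$ through by $x^{d}y^{2}$ then recovers $P_k(x,y)$, so $P_k(1/x,1/y)=x^{-d}y^{-2}P_k(x,y)$ and $P_k$ is reciprocal.

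For the two geometric statements I would invoke the completing-the-square substitution $X=\frac{x+1}{x-1}$, $Y=\frac{2A(x)y+B_k(x)}{x^{\e}f(x)(x-1)^{3}}$ set up above, which carries the defining equation to $Y^{2}=h(X^{2})$ with $h$ cubic and turns $\sigma$ into $(X,Y)\mapsto(-X,\pm Y)$. The point that genuinely needs the hypotheses is that this substitution is legitimate, i.e.\ that $f(x)^{2}\mid D_k(x)$ for the second and third types: here the prescribed value $l=2f(0)k-B(1)+\delta\,2A(-f(0))$ (and its third-type analogue) forces $B_k(-f(0))=\delta\,2A(-f(0))$, so $\Delta_k$ vanishes at the root of $f$, and then, as shown above, $D_k/f$ is anti-reciprocal and hence divisible by $f$ a second time. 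The divisibility hypotheses $x+1\nmid A$ (first type) and $f\nmid A$ (second and third types) are what keep $P_k$ irreducible: writing $g$ for the relevant linear factor, one has $g\mid C$ whenever $g\mid A$ because $C=x^{d}A(1/x)$, while $B_k$ vanishes at the root of $g$ — automatically for the first type, and for the others because $B_k(-f(0))=\delta\,2A(-f(0))=0$ once $A$ vanishes there — so $g\mid A$ would give $g\mid P_k$. Granting irreducibility, $Y^{2}=h(X^{2})$ is a hyperelliptic genus-$2$ curve whose quotient under $X\mapsto -X$ is the genus-$1$ curve $y^{2}=h(x)$ or $y^{2}=h^{*}(x)=x^{3}h(1/x)$. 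The residual nondegeneracy is handled by ``generically'': the discriminant of $h$ is a polynomial in the family parameters that is not identically zero, so $h$ has distinct nonzero roots for all but finitely many members, yielding genus exactly $2$ and a genuine elliptic quotient.

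The main obstacle, and the step I would write most carefully, is temperedness, since this is what pins down the degree ranges for $A$. I would draw the Newton polygon of $P_k$: the monomials of $A(x)$ sit at height $y=2$, those of $C(x)=x^{d}A(1/x)$ at height $y=0$, and the $k$-, $l$- (and $a$-) monomials at height $y=1$. The content of each listed degree condition is exactly that these height-$1$ lattice points are \emph{interior} to the polygon. For example, for the second type with $B=0$ the point $(3,1)$ is strictly inside precisely when $\deg A\ge 3$, and $\deg A\le d$ is automatic, so $\deg A\in\{3,4\}$; the other ranges come from the analogous inequalities, and the choice $B=2x^{d}+2$ is what forces $\deg A=d$ so that no height-$1$ monomial reaches the boundary. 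Once the height-$1$ monomials are interior, the only face polynomials are the top face $A(x)$, the bottom face $C(x)=x^{d}A(1/x)$, and the two short side faces; the first two have roots of unity because $A$ is a product of cyclotomic polynomials and $C$ is its reciprocal, while the side faces reduce to binomials with unit extreme coefficients — becoming $(y\pm1)^{2}$ in the case $B=2x^{d}+2$, where $A$ reciprocal gives $A(0)=1$ — so in every case their nonzero roots are roots of unity. Assembling these, every face polynomial of $P_k$ has only roots of unity, so $P_k$ is tempered and all of Condition~\ref{condition:Pconditiong2} holds. I expect the only friction to be the tidy case-checking of the boundary configurations $\deg A=d$ and of the third-type polygon, where the matching ``$a=$ coefficient of $x$ in $A$'' must be used to keep the height-$1$ monomials off the edges.
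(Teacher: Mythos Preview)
Your proposal is correct and follows essentially the same approach as the paper: the proposition is stated there as a summary of the preceding discussion, and your argument reassembles that discussion---reciprocity from the symmetric placement of the coefficients, the completing-the-square substitution $X=\tfrac{x+1}{x-1}$ to reach $Y^2=h(X^2)$ for the genus computations, the derivation of $l$ from $D_k(-f(0))=0$ together with the (anti)reciprocity trick to upgrade $f\mid D_k$ to $f^2\mid D_k$, and the Newton-polygon check that the $k$-monomials lie in the interior. Your treatment of the side faces (binomials with unit extremes, resp.\ $(y+1)^2$ when $B=2x^d+2$) is a bit more explicit than the paper's, but it is exactly the intended verification.
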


\begin{remark}\label{remark:g2part}
For the second type, we can assume $f(x)=x-1$ by letting $x\rightarrow -x$ and $k\rightarrow -k$.
Furthermore, if $B(x)=0$, we can assume $l=-2k+ 2A(1)$ by letting $y\rightarrow -y$ and $k\rightarrow -k$.
\end{remark}

Next we discuss the distribution of branch points namely the roots of $\widetilde{D}_k(x)=D_k(x)/f(x)^2$. Note that $\widetilde{d}=\deg(\widetilde{D}_k(x))$ is 5 or 6 and $x\mid D_k(x)$ if $\deg(\widetilde{D}_k(x))$ is 5. Following Boyd's notation in \cite{Boy98}, we say these points have distribution $(a,b,c)$ if there are $a,b$ and $c$ branch points outside, on and inside the unit circle $\abs{x}=1$.

\begin{prop}\label{prop:branchg2}
Let notations be as above and $k \in \R, \abs{k}\gg 0$. Then
\begin{enumerate}
  \item for the first type, the distribution of branch points is $(2,2,\widetilde{d}-4)$;
  \item for the second and third type, the distribution of branch points is $(2,2,\widetilde{d}-4)$ for $f(0)\delta k \ll 0$ and $(3,0,\widetilde{d}-3)$ for $f(0)\delta k\gg 0$;
\end{enumerate}
\end{prop}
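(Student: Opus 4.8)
The plan is to combine the reciprocal symmetry of $\widetilde D_k$ with a large-$\abs{k}$ perturbation analysis, reducing the whole statement to a root-location problem on a real interval. Since $D_k$ is reciprocal and $f(x)^2=(x\mp1)^2$ is reciprocal, $\widetilde D_k=D_k/f(x)^2$ is reciprocal; when $\widetilde d$ is odd one has $x\mid\widetilde D_k$, and I would first pull out this factor (the branch point $x=0$) so that the remaining factor is reciprocal of even degree. To such an even reciprocal polynomial I would apply the substitution $u=x+x^{-1}$, writing it as $x^{m}G_k(u)$. The unit circle $\abs{x}=1$ maps to the segment $u\in[-2,2]$, and the dictionary is: a real root $u_0\in(-2,2)$ of $G_k$ gives a conjugate pair $e^{\pm i\theta}$ \emph{on} the circle; a real root with $\abs{u_0}>2$ gives a real reciprocal pair $\{r,1/r\}$ (one inside, one outside); and a non-real root $u_0$, paired with $\bar u_0$, gives a quadruple $\{r,1/r,\bar r,1/\bar r\}$ (two inside, two outside). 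Thus the statement becomes a question about the position of the roots of $G_k$ relative to $\pm2$.

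Next I would compute the behaviour for $\abs{k}\gg0$. Writing $B_k=\beta(x)+k\phi(x)$ with $\phi(x)=x(x+1)$ for the first type and $\phi(x)=x^{1+\e}f(x)^2$ for the second and third types, one finds that the part of $\widetilde D_k$ quadratic in $k$ is proportional to $\phi(x)^2/f(x)^2$ (with $f\equiv1$ in the first type), whose zeros are a double zero at $x=0$ and a double zero at the circle point $x_0$, where $x_0=-1$ (first type) or $x_0=-f(0)$ (second and third types), so $\abs{x_0}=1$ and $u_0:=x_0+x_0^{-1}\in\{-2,+2\}$. For $\abs{k}\gg0$ exactly two branch points converge to $x_0$; since the other roots stay near $x=0$ or escape toward $\infty$, these two are the only candidates to lie on the circle, while reciprocity fixes the inside/outside contributions of everything else (up to the single unpaired point $x=0$, present when $\widetilde d=5$, whose reciprocal partner is the branch point at infinity). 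Hence the only degree of freedom is whether the pair at $x_0$ lies on the circle. This explains the shape of the answer in a convention-independent way: the two distributions differ by $(3,0,\widetilde d-3)-(2,2,\widetilde d-4)=(+1,-2,+1)$, which is exactly the effect of a conjugate pair on the circle (contributing $(0,2,0)$) turning into a real reciprocal pair (contributing $(1,0,1)$) as the corresponding root of $G_k$ crosses $\pm2$.

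This is the crux, and it is where the types diverge. Because the quadratic-in-$k$ term vanishes to second order at $x_0$, I would expand $G_k$ to the next order there. For the \emph{first} type the circle point is $x_0=-1$, and $\beta(-1)=B(-1)=0$ for both admissible choices $B=0$ and $B=x^3+1$; hence the linear-in-$k$ term $2k\beta\phi$ contributes nothing near $x_0$, the leading correction to $u_0+2$ is of size $O(1/k^2)$ with a fixed (positive) sign, and the pair lands on the circle regardless of the sign of $k$. This is precisely why the first type has the single distribution $(2,2,\widetilde d-4)$. For the \emph{second and third} types the circle point is the zero $x_0=-f(0)$ of $f$, where the formulas for $l$ in Proposition~\ref{prop:constructiong2} give $\beta(-f(0))$ equal to a nonzero multiple of $\delta A(-f(0))$ (using $f\nmid A$); now the linear term survives, the correction to $u_0+2f(0)$ is of size $O(1/k)$, and its sign is that of $\delta A(-f(0))/k$. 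Using that $A$ is a product of cyclotomic polynomials with $f\nmid A$, one checks $A(-f(0))>0$, so this sign is exactly that of $f(0)\delta k$: for $f(0)\delta k\ll0$ the relevant root of $G_k$ stays in $(-2,2)$ and the pair is on the circle, giving $(2,2,\widetilde d-4)$; for $f(0)\delta k\gg0$ it moves past $\pm2$, the pair becomes a real reciprocal pair, and the distribution becomes $(3,0,\widetilde d-3)$.

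Finally I would assemble the global count from the local pictures at $x=0$, at $x_0$, and at the escaping reciprocal configurations, via the $u\leftrightarrow x$ dictionary, treating the extra branch point at infinity (when $\widetilde d=5$) as the reciprocal partner of $x=0$. The main obstacle is the second-order perturbation at the double point on the unit circle: the leading term degenerates there, so deciding ``on'' versus ``just off'' the circle rests entirely on the next coefficient, and one must verify that the sign-governing quantities (the explicit value of $\beta(-f(0))$ and the sign of $A(-f(0))$, together with the leading discriminant correction) are nonzero and of the expected sign \emph{uniformly} over all admissible $A$ and $B$ in Proposition~\ref{prop:constructiong2}; the bookkeeping of the $\widetilde d=5$ case, where $x=0$ and the point at infinity must be accounted for consistently, is the other point requiring care.
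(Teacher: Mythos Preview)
Your approach is correct and shares its core with the paper's proof: both locate the roots of $\widetilde D_k$ via a dominant-term/Rouch\'e argument for $|k|\gg 0$, then decide whether the pair of roots near the unit-circle point $x_0$ lies on or off the circle, invoking reciprocity to reduce this to a single real sign check. The paper carries this out more directly: it simply evaluates $\widetilde D_k(x_0)$ to leading order in $k$ (obtaining $\widetilde D_k(1)\sim 4\delta A(1)k$ in the representative case $f(x)=x-1$ of the second type), then argues by the intermediate value theorem that the two nearby roots are real (hence a reciprocal pair straddling the circle, contributing $(1,0,1)$) when $\widetilde D_k(x_0)<0$, and non-real (hence forced onto the circle by the identity $\{\alpha,\bar\alpha\}=\{1/\alpha,1/\bar\alpha\}$, contributing $(0,2,0)$) when $\widetilde D_k(x_0)>0$. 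Your substitution $u=x+x^{-1}$ is a natural and valid reformulation of exactly this dichotomy---root of $G_k$ in $(-2,2)$ versus outside---and it makes the reciprocity bookkeeping (including the $\widetilde d=5$ case with the $x=0$/infinity pair) very clean, but it adds a layer of translation that the paper bypasses.

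One caution: your sign bookkeeping for the $O(1/k)$ correction in the second and third types is not quite right as written. You assert that the correction to $u_0+2f(0)$ has the sign of $\delta A(-f(0))/k$ and that ``this sign is exactly that of $f(0)\delta k$''; but $\delta A(-f(0))/k$ has the sign of $\delta k$, not $f(0)\delta k$. In fact two factors of $-f(0)$ are missing---one from $x_0^m$ when passing from $\widetilde D_k(x_0)$ to $G_k(u_0)$, and one from the fact that ``into $(-2,2)$'' at $u_0=-2f(0)$ means $\epsilon f(0)>0$ rather than $\epsilon>0$---and they cancel, so your final conclusion is correct. In a full write-up you would want to track these explicitly; the paper's direct evaluation of $\widetilde D_k(x_0)$ on the real line sidesteps this entirely.
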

\begin{proof}
We prove the Proposition for the second type and $f(x)=x-1$, the other cases are similar.

By Proposition~\ref{prop:constructiong2}, we have $l=-2k -B(1) + 2\delta A(1)$ and
\begin{align*}
D_k(x)&=(B(x)+kx^3+lx^2+kx)^2-4A(x)C(x) \\
&=(kx(x-1)^2+B(x)+(2\delta A(1)-B(1))x^2)^2-4A(x)C(x) \\
&=k^2x^2(x-1)^4+2kx(x-1)^2(B(x)+(2\delta A(1)-B(1))x^2)+R(x).
\end{align*}
where $(x-1)^2 \mid R(x)$. So we have
\begin{equation}\label{eqn:Dtilde}
\widetilde{D}_k(x)=k^2x^2(x-1)^2+2kx(B(x)+(2\delta A(1)-B(1))x^2)+\frac{R(x)}{(x-1)^2}
\end{equation}
where $k^2x^2(x-1)^2$ is the dominant term. By Rouch\'{e}'s theorem, $\widetilde{D}_k(x)$ has two roots around $x=0$ and two roots around $x=1$ . Since $\widetilde{D}_k(x)$ is reciprocal, it also has $\widetilde{d}-4$ roots with big absolute value.

If $\delta k\ll 0$, then $\widetilde{D}_k(1) \sim 4\delta A(1)k<0$ and $\widetilde{D}(x)\sim k^2x^2(x-1)^2>0$ if $x\neq 1$, the two roots of $\widetilde{D}_k(x)$ around 1 are real. On the other hand, if $\delta k\gg 0$, $\widetilde{D}(x)>0$ around 1, the two roots are not real. Since $\widetilde{D}_k(x)$ is reciprocal, if $x$ is a root, then $\bar{x},1/x,1/\bar{x}$ are all roots of $\widetilde{D}_k(x)$. But there are only two roots around 1, the two roots must be on the unit circle.
\end{proof}

\subsection{Integration path and relation between the Mahler measure and special value of $L$-function}\label{subsection:path}
Now we consider the integration path of the Mahler measure of $P_k(x,y)$ as $k\in \R$ and $\abs{k}\gg 0$. The two roots of $P_k(x,y)=0$ are
\begin{equation*}
y_1(x),y_2(x)=\frac{-B_k(x)\pm \sqrt{\Delta_k(x)}}{2A(x)}=\frac{-B_k(x)/x^\frac{d}{2}\pm \sqrt{\Delta_k(x)/x^d}}{2A(x)/x^{\frac{d}{2}}}
\end{equation*}
where $d=3, 4$ and 6 for the first, second and third type respectively. Since $B_k(x)$ and $\Delta_k(x)$ are reciprocal, both $-B_k(x)/x^\frac{d}{2}$ and $\Delta_k(x)/x^d$ are real on the unit circle $\abs{x}=1$.
Note that $\abs{y_1(x)y_2(x)}=\abs{C(x)/A(x)}=1$, we have $\abs{y_1(x)}=\abs{y_2(x)}=1$ if $\Delta_k(x)/x^d \le 0$ which does not contribute to the Mahler measure. If $\Delta_k(x)/x^d>0$, suppose $\abs{y_1(x)}>\abs{y_2(x)}$, then $\abs{y_1(x)}>1>\abs{y_2(x)}$.

Let $f:Z_k \rightarrow Z_k/\langle \sigma \rangle$ be the quotient map between curves
where the quotient curve $E_k=Z_k/\langle \sigma \rangle$ is generically an elliptic curve as in Section \ref{subsection:constructingg2} and $M=\{x,y\}\in K_2^T(Z_k)\otimes\Q$ since $P_k(x,y)$ is tempered. By equation~\eqref{eqn:pushpullK2}, we have
\begin{equation}\label{pushpullM}
f^*f_*(M)=M\sigma(M)=\{x,y\}\left\{\frac{1}{x},\frac{1}{y}\right\}=2M.
\end{equation}
By \eqref{pushpullM} and \eqref{eqn:pushforward}, we can pushforward the regulator integral of $M$ on $Z_k$ to the regulator integral of $f_*(M)$ on $E_k$
$$\frac{1}{\pi}\int_{\c}\eta(M)=\frac{1}{2\pi}\int_{\c}\eta(f^*{f}_*(M))=\frac{1}{2\pi }\int_{f(\c)}\eta({f}_*(M))$$
when $\c$ is a cycle in $H_1(Z_k,\Z)$.
Hence by the discussions in Section \ref{subsection:mahlerandregulator} and Beilinson's conjecture, we can realize the Mahler measure as a regulator integral of $M$ and expect $m(P_k(x,y))/L'(E_k,0) \in \Q^{\times}$ if the following condition is satisfied
\begin{condition}\label{condition:relation}
\begin{enumerate}[label=(\alph*)]
  \item $M \in K_2^T(Z_k)_{\textup{int}}\otimes \Q$;
  \item Let $S=\{(x,y_1(x)) : \abs{x}=1,\abs{y_1(x)}\ge 1\}$. Then $\int_{S}\eta(M)$ is a rational multiple of  $\int_{\c}\eta(\{x,y\})$ where $\c$ is a cycle in $H_1(Z_k,\Z)$.
\end{enumerate}
\end{condition}

One can show as Theorem~8.3 (3) of \cite{Jeu06} that (a) is satisfied for $k \in \Z$. We claim (b) is satisfied if $\abs{k} \gg 0$ for the first type and $f(0)\delta k \ll 0$ for the second and third type.

For example, we consider polynomials of the second type and suppose $f(0)=-1$ and $\d=1$ where $\d$ is as in Proposition~\ref{prop:branchg2}. Then there is a singularity at $(1,-1)$.
If $k\gg 0$, by Proposition~\ref{prop:branchg2}, there are 2 branch points on the unit circle.
By equation \eqref{eqn:Dtilde}, we have $\widetilde{D}_k(-1)>0$ and
$\widetilde{D}_k(1)>0$ for $k\gg 0$. Note that
$$\frac{\Delta_k(x)}{x^4}=\frac{\widetilde{D}_k(x)(x-1)^2}{x^4}=\frac{\widetilde{D}_k(x)}{x^3}\left(x+\frac{1}{x}-2\right).$$ where $(x+1/x-2)<0$ on $\abs{x}=1$ except at $x=1$. Hence on the unit circle $\Delta_k(x)/x^4<0$ around $x=1$ and $\Delta_k(x)/x^4>0$ around $x=-1$. On the arc on $\abs{x}=1$ between the two branch points which does not contain the singularity at $x=1$, we have $\abs{y_1(x)}\ge 1$ and only this part contribute to the Mahler measure.
Let $\c$ be the lift of this arc to $Z_k$. Then $\c \in H_1(Z_k,\Z)$ and $\c$ is the union of $S$ and $\{(x,y_2(x)) : \abs{x}=1,y_2(x)\le 1\}$. Since $\abs{y_1(x)y_2(x)}=1$ on $\abs{x}=1$, we have
$$\int_{\c}\eta(M) = 2\int_{S}\eta(M)$$
with a proper orientation of $\c$.

On the other hand, if $k\ll 0$, $D_k(x)/x^4>0$ on the unit circle except at $x=1$, but $S$ does not correspond to a non-trivial cycle in $H_1(Z_k,\Z)$ since the singularity at $x=1$ blow up.

Hence by the above discussion, when $k\in \Z$, we expect $m(P_k(x,y))/L'(E_k,0) \in \Q^{\times}$ for $\abs{k} \gg 0$ if $P_k(x,y)$ is of the first type and for a semi-infinite interval if $P_k(x,y)$ is of the second and third type. This is compatible with the Boyd's numerical results in \cite{Boy98} and our numerical calculations in \cite{LQGithub}.

\subsection{$K_2$ of families of genus 2 curves}
In Section~\ref{subsection:constructingg2}, we construct families of tempered reciprocal polynomials generically defining genus 2 curves. There is an element $\{x,y\}$ in $K_2^T\otimes\Q$ of these curves. But since these curves are generically of genus 2, by Beilinson's conjecture, we want to explicitly construction another element in $K_2^T\otimes\Q$. Based on the following simple observation, we can achieve this for the families of curves defined by polynomials in Table \ref{tab:polynomialg2}.

\begin{lemma}\label{lemma:tempered}
Let $Q_k(x,y)$ be the tempered families of polynomials in Table \ref{tab:polynomialg2}. $Q_k(x-1,y)$ are also tempered.
\end{lemma}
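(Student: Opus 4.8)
I would prove this directly from the combinatorial criterion recalled in Section~\ref{subsection:mahlerandregulator}: the polynomial $Q_k(x-1,y)$ is tempered if and only if every face polynomial of its Newton polygon has all of its roots equal to roots of unity. The only structural fact about the substitution that I need is that $x\mapsto x-1$ fixes the $x$-degree and the leading $x$-coefficient of each of the three coefficients $A(x)$, $B_k(x)$, $C(x)$ of $Q_k=A(x)y^2+B_k(x)y+C(x)$, while filling in every lower power of $x$ down to $x^0$. Consequently the Newton polygon of $Q_k(x-1,y)$ agrees with that of $Q_k$ along its right-hand (maximal $x$-degree) boundary, and is enlarged on the left to the vertical line $i=0$: it acquires a horizontal top edge from $(0,2)$ to $(\deg A,2)$, a horizontal bottom edge from $(0,0)$ to $(\deg C,0)$, and a vertical left edge from $(0,0)$ to $(0,2)$.

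I would then sort the edges into three groups. The right-hand edges carry only the top-degree monomials of $A$, $B_k$, $C$, whose coefficients are the leading coefficients of these polynomials; since these are untouched by the substitution, the corresponding face polynomials literally coincide with those of $Q_k$ and therefore have roots of unity because $Q_k$ is tempered. The horizontal top and bottom edges have face polynomials $A(x-1)$ and $C(x-1)$. Here I use that each $A$ occurring in Table~\ref{tab:polynomialg2} is a product of $\Phi_3(x)=x^2+x+1$ (and, via $C(x)=x^{d}A(1/x)$, of the monomial $x$); since $\Phi_3(x-1)=\Phi_6(x)=x^2-x+1$, the polynomials $A(x-1)$ and $C(x-1)$ are products of $(x-1)$, $\Phi_6$ and monomials, so their roots are $1$ and the primitive sixth roots of unity. (That the nonzero roots of $A$ must be primitive cube roots of unity is in fact forced: it is exactly the condition under which a root $\zeta$ of $A$ satisfies $\zeta+1\in\mu_\infty$, and hence under which $A(x-1)$ stays cyclotomic.)

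The heart of the argument is the vertical left edge, whose face polynomial is $Q_k(-1,y)=A(-1)y^2+B_k(-1)y+C(-1)$. The key point I would establish is that $B_k(-1)$ is \emph{independent of $k$}. For the first type $B_k(-1)=B(-1)=0$ since $B\in\{0,\,x^3+1\}$; for the second and third types the $k$-dependent terms cancel against the prescribed value of $l$ from Proposition~\ref{prop:constructiong2}, and this cancellation is precisely where one uses that the node of $Z_k$ (the double point with $(x+1)^2\mid\Delta_k$) sits at $x=-1$, giving $B_k(-1)=\pm 2\delta A(-1)$. Combining this with the reciprocity relation $C(-1)=(-1)^{d}A(-1)$, the face polynomial becomes $A(-1)(y^2-1)$ for the first type and the perfect square $A(-1)(y\pm\delta)^2$ for the second and third, whose roots lie in $\{\pm1\}$. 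Conceptually, the substitution $x\mapsto x-1$ merely slides the node onto the coordinate axis, and reciprocity (a double root $y_0$ must satisfy $y_0^2=(-1)^d\in\mu_4$) forces the resulting root onto the unit circle at a torsion point.

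I expect the left-edge computation to be the main obstacle, for two reasons. First, it is a genuine condition rather than a formality: for a generic tempered reciprocal $Q_k$ the substitution $x\mapsto x-1$ destroys temperedness (e.g.\ when $A$ has a factor $\Phi_4$, since $\Phi_4(x-1)$ has the non-cyclotomic roots $1\pm i$), and what rescues it is the $k$-independence of $B_k(-1)$ together with the placement of the node at $x=-1$; this step must be carried out family by family using Proposition~\ref{prop:constructiong2}, applying $x\mapsto-x$ first for any family normalized with node at $x=1$. Second, the exact edge structure of the new Newton polygon varies with $\deg A$ and with whether $B$ contributes a nonzero constant term, so I would verify in each case that no $k$-dependent coefficient lands on an edge: on the right edges nothing changes, and on the left edge the three coefficients $C(-1),B_k(-1),A(-1)$ are all $k$-independent by the same cancellation, so every face polynomial is a fixed polynomial with roots of unity rather than a genuine $k$-family. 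The interiority of the $k$-monomials already built into the construction of $Q_k$ guarantees there is nothing further to check, since enlarging the polygon only to the left and downward cannot expose a previously interior lattice point on the right-hand edges.
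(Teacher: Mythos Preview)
Your approach is correct and considerably more informative than the paper's own argument, which is literally the single sentence ``One can check this directly.'' The paper treats the lemma as a finite verification over the fourteen rows of Table~\ref{tab:polynomialg2}; you instead isolate the structural reasons the substitution works. Your three observations are exactly the right ones: the right-hand edges see only leading $x$-coefficients and hence are untouched by $x\mapsto x-1$; the horizontal edges carry $A(x-1)$ and $C(x-1)$, which stay cyclotomic precisely because every $A$ in the table is a power of $\Phi_3$ and $\Phi_3(x-1)=\Phi_6(x)$; and the left edge carries $Q_k(-1,y)=A(-1)y^2+B_k(-1)y+C(-1)$, which is $k$-independent and factors as $y^2-1$ or $(y\pm 1)^2$ because $B_k(-1)$ collapses to $0$ (first type) or $\pm 2A(-1)$ (second and third types, node at $x=-1$). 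I checked several families against your template and it goes through cleanly; in particular all fourteen families in the table do have $f(0)=1$, i.e.\ the node at $x=-1$, so your left-edge computation applies uniformly.

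One small correction: your contingency ``applying $x\mapsto -x$ first for any family normalized with node at $x=1$'' would not actually rescue the argument, since replacing $x$ by $-x$ in $Q_k$ changes the polynomial $Q_k(x-1,y)$ whose temperedness is at stake. Fortunately this contingency never arises for the families listed, so the point is moot; but if you keep the remark, phrase it instead as the observation that the lemma \emph{requires} the node to sit at $x=-1$ (equivalently $f(0)=1$), which is a genuine constraint selecting the families of Table~\ref{tab:polynomialg2} among those produced by Proposition~\ref{prop:constructiong2}. What your argument buys over the paper's is an explanation of why exactly these $A$'s and these choices of $f,\delta$ appear: they are forced by $\Phi_3(x-1)\in\{\text{cyclotomics}\}$ together with the $k$-independence of $B_k(-1)$.
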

\begin{proof}
One can check this directly.
\end{proof}

\begin{table}
\caption{Tempered families of polynomials $Q_k(x,y)$ such that $Q_k(x-1,y)$ are also tempered}
\centering
\begin{tabular}{cc}
  \hline
  No. & $Q_k$\\
  \hline
1 & $y^2 + (x^3 + kx^2 + kx + 1)y + x^3$ \\
2 & $(x^2 + x + 1)(y^2+x) + (x^3 + kx^2 + kx + 1)y$ \\
3 & $(x^2 + x + 1)(y^2+x) + (kx^2 + kx)y$ \\
4 & $(x^2+x+1)^2(y^2+1) + (kx^3 + (2k + 2)x^2 + kx)y$ \\
5 & $y^2 + (x^4 + kx^3 + (2k - 4)x^2 + kx + 1)y + x^4$ \\
6 & $y^2 + (x^4 + kx^3 + 2kx^2 + kx + 1)y + x^4$ \\
7 & $(x^2 + x + 1)(y^2+x^2) + (x^4 + kx^3 + (2k - 4)x^2 + kx + 1)y$ \\
8 & $(x^2 + x + 1)(y^2+x^2) + (x^4 + kx^3 + 2kx^2 + kx + 1)y$ \\
9 & $(x^2+x+1)^2(y^2+1) + (x^4 + kx^3 + (2k - 4)x^2 + kx + 1)y$ \\
10& $(x^2+x+1)^2(y^2+1) + (x^4 + kx^3 + 2kx^2 + kx + 1)y$ \\
11& $(x^2+x+1)^2(y^2+1) + (2x^4 + kx^3 + (2k - 6)x^2 + kx + 2)y$ \\
12& $(x^2+x+1)^2(y^2+1) + (2x^4 + kx^3 + (2k - 2)x^2 + kx + 2)y$ \\
13& $(x^2+x+1)^3(y^2+1) + (2x^6 + 6x^5 + kx^4 + (2k-6)x^3 + kx^2 + 6x + 2)y$ \\
14& $(x^2+x+1)^3(y^2+1) + (2x^6 + 6x^5 +kx^4 + (2k-10)x^3 + kx^2 + 6x + 2)y$ \\
  \hline
\end{tabular}
\label{tab:polynomialg2}
\end{table}

This observation is interesting from both the $K$-theoretical perspective which will be demonstrated in Theorem \ref{thm:K2elements} and the Mahler measure perspective which will be explained in the Section \ref{subsection:nonreciprocal}.

Let $Z_k$ be the curve defined by $Q_k(x,y)$. Then we have another element in $K_2^T(Z_k)\otimes\Q$.

\begin{corollary}
$\{x+1,y\}\in K_2^T(Z_k)\otimes \Q$.
\end{corollary}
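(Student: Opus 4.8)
The plan is to show that $\{x+1,y\}$ lies in $K_2^T(Z_k)\otimes\Q$, which by definition means verifying that this Steinberg symbol has trivial tame symbols at every closed point of $Z_k$ (after tensoring with $\Q$). The natural route is to connect $\{x+1,y\}$ to the element $\{x,y\}$, which we already know lies in $K_2^T(Z_k)\otimes\Q$ because $Q_k(x,y)$ is tempered, and to exploit Lemma~\ref{lemma:tempered}, which tells us that $Q_k(x-1,y)$ is also tempered. The key observation is that temperedness of $Q_k(x-1,y)$ implies, again by the criterion of Rodriguez-Villegas cited in Section~\ref{subsection:mahlerandregulator}, that the symbol $\{X,Y\}$ lies in $K_2^T(Z_k')\otimes\Q$, where $Z_k'$ is the curve defined by $Q_k(X,Y)=0$ with the coordinate $X=x-1$, i.e. $x=X+1$. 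Under the substitution $X=x-1$, this symbol pulls back to $\{x-1,y\}$ on $Z_k$.

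Thus I would proceed in two steps. First, from the temperedness of $Q_k(x,y)$ deduce $\{x,y\}\in K_2^T(Z_k)\otimes\Q$; from the temperedness of $Q_k(x-1,y)$ deduce $\{x-1,y\}\in K_2^T(Z_k)\otimes\Q$ via the change of variables just described, since the curve is the same abstract curve $Z_k$ and the property of lying in the tame kernel is intrinsic. Second, I would combine these using the additivity and antisymmetry of the Steinberg symbol. One natural manipulation is to consider $\{x^2-1,y\}=\{(x-1)(x+1),y\}=\{x-1,y\}+\{x+1,y\}$, so that $\{x+1,y\}=\{x^2-1,y\}-\{x-1,y\}$. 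It would then suffice to show $\{x^2-1,y\}\in K_2^T(Z_k)\otimes\Q$. Alternatively, and perhaps more directly, one can try to exhibit $\{x+1,y\}$ as a $\Q$-linear combination of symbols already known to be tame — for instance relating $\{x+1,y\}$ to $\{x,y\}$ and $\{x-1,y\}$ through the defining relations of the specific polynomials in Table~\ref{tab:polynomialg2}, where the factor $x^2+x+1$ or powers of it appear in $A(x)$ and interact with the Steinberg relation $\{a,1-a\}=0$.

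The main obstacle I expect is controlling the tame symbols at the points where the extra factor $x+1$ meets the zeros and poles of $y$ on $Z_k$. The tame symbol of $\{x+1,y\}$ at a closed point $P$ is $(-1)^{\ord_P(x+1)\ord_P(y)}\,(x+1)^{\ord_P(y)}\,y^{-\ord_P(x+1)}$ evaluated in the residue field, so I must check these vanish (up to torsion) at the points lying over $x=-1$ and over the zeros and poles of $x+1$ and of $y$. The temperedness of $Q_k(x-1,y)$ is exactly the input that guarantees good behaviour of $\{x-1,y\}$, and by the reciprocal symmetry of $Q_k$ together with the involution $\sigma:(x,y)\mapsto(1/x,1/y)$, the behaviour at $x=-1$ should mirror that at $x=0,\infty$ in a way that forces the tame symbols into the torsion subgroup. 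I would therefore carry out the local computation of tame symbols over $x=-1$ explicitly, using that $y$ takes reciprocal values $y_1,y_2$ with $y_1y_2=C(-1)/A(-1)$ a root of unity (since $A$ and $C=x^dA(1/x)$ are built from cyclotomic factors), which is precisely what makes the symbol tame after tensoring with $\Q$.
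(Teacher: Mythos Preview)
Your core idea --- use the temperedness of $Q_k(x-1,y)$ and transport the resulting tame symbol along the isomorphism to $Z_k$ --- is exactly the paper's argument. However, you carry out the change of variables in the wrong direction, and this single slip sends you on an unnecessary detour.

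Here is the issue. Let $C'$ be the curve $Q_k(x-1,y)=0$ in coordinates $(x,y)$. Because the polynomial $Q_k(x-1,y)$ is tempered \emph{in the variables $x,y$}, the Rodriguez-Villegas criterion gives $\{x,y\}\in K_2^T(C')\otimes\Q$ --- it is the symbol in the \emph{coordinates of the tempered polynomial} that is tame, not $\{X,Y\}$ with $X=x-1$. The isomorphism $C'\to Z_k$ is $(x,y)\mapsto(x-1,y)$, so the function $x$ on $C'$ corresponds to the function $x+1$ on $Z_k$. Hence $\{x,y\}$ on $C'$ becomes $\{x+1,y\}$ on $Z_k$, and you are done immediately. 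Your write-up instead transports $\{X,Y\}$ (which is tame only because $Q_k(X,Y)$ is tempered --- the already-known fact) and lands on $\{x-1,y\}$, which the lemma does \emph{not} give you: there is no claim that $Q_k(x+1,y)$ is tempered.

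Everything in your ``second step'' --- the manipulation with $\{x^2-1,y\}$, the appeal to Steinberg relations for the specific families, the proposed local computation of tame symbols at $x=-1$ --- is superfluous once the direction of the substitution is corrected. Those local computations could in principle yield an alternative proof, but you have not carried them out, and they would require family-by-family work that the one-line isomorphism argument avoids entirely.
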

\begin{proof}
By Lemma \ref{lemma:tempered}, $\{x,y\}$ is in tame $K_2$ of the curve defined by $Q_k(x-1,y)$ modulo torsion. Hence $\{x+1,y\}$ is in $K_2^T(Z_k)\otimes \Q$.
\end{proof}

\begin{theorem}\label{thm:K2elements}
Let all notation be as above.
\begin{enumerate}
\item[(1)] If $k \in \Z$, then $M_1=\{x,y\}$ and $M_2=\{x+1,y\}$ are in $K_2^T(Z_k)_{\textup{int}}\otimes \Q$.
\item[(2)] If $\abs{k} \gg 0$, then $M_1$ and $M_2$ are linearly independent. In particular, if $k \in \Z$ and $\abs{k} \gg 0$, we have two independent elements $M_1$ and $M_2$ in $K_2^T(Z_k)_{\textup{int}}\otimes \Q$.
\end{enumerate}
\end{theorem}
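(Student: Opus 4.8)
The plan is to prove the two parts separately, using that $x\mapsto x+1$ realizes $Z_k$ as the isomorphic image of the tempered curve $Q_k(x-1,y)=0$, and that linear independence can be detected by a single $2\times 2$ regulator determinant. For Part~(1), the integrality of $M_1=\{x,y\}$ for $k\in\Z$ is exactly condition (a) of Condition~\ref{condition:relation}, which holds by the argument of \cite[Theorem~8.3(3)]{Jeu06} recalled in Section~\ref{subsection:path}. For $M_2$ I would invoke Lemma~\ref{lemma:tempered}: since $Q_k(x-1,y)$ is tempered, the same argument shows $\{x,y\}$ is integral in $K_2^T(W_k)_{\textup{int}}\otimes\Q$, where $W_k$ denotes the curve $Q_k(x-1,y)=0$. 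The map $\psi\colon W_k\to Z_k$, $(x,y)\mapsto(x-1,y)$, is an isomorphism with $\psi^*\{x+1,y\}=\{x,y\}$, and an isomorphism of curves induces an isomorphism on integral tame $K_2$ modulo torsion; hence integrality transfers to $M_2=\{x+1,y\}$ on $Z_k$, proving~(1).

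For Part~(2) I would first reduce linear independence to the nonsingularity of a regulator matrix: a relation $aM_1+bM_2=0$ in $K_2^T(Z_k)\otimes\Q$ forces $a\langle\gamma,M_1\rangle+b\langle\gamma,M_2\rangle=0$ for every $\gamma\in H_1(Z_k(\C),\Z)^-$, so it suffices to find two cycles making $\bigl(\langle\gamma_i,M_j\rangle\bigr)$ nonsingular. I would replace $M_2$ by $N:=2M_2-M_1=\{(x+1)^2/x,\,y\}$, which together with $M_1$ spans the same $\Q$-plane. A direct symbol computation gives $\sigma(M_1)=M_1$ and $\sigma(M_2)=M_1-M_2$, hence $\sigma(N)=-N$, where $\sigma$ is the involution~\eqref{eqn:sigma}; thus $M_1$ is $\sigma$-invariant and $N$ is $\sigma$-anti-invariant. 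Since $\sigma$ commutes with complex conjugation and $E_k=Z_k/\langle\sigma\rangle$ is an elliptic curve, the two-dimensional space $H_1(Z_k(\C),\Z)^-$ splits under $\sigma$ into a $\sigma$-invariant line $\langle\gamma_+\rangle$ (from $H_1(E_k)^-$) and a $\sigma$-anti-invariant line $\langle\gamma_-\rangle$ (from the complementary, Prym, elliptic quotient $F_k$). Using $\eta(\sigma\alpha)=\sigma^*\eta(\alpha)$ together with $\sigma_*\gamma_\pm=\pm\gamma_\pm$, one checks $\langle\gamma_-,M_1\rangle=0$ and $\langle\gamma_+,N\rangle=0$, so the regulator matrix in the bases $\{\gamma_+,\gamma_-\}$ and $\{M_1,N\}$ is diagonal.

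It then remains to show both diagonal entries are nonzero for $\abs{k}\gg0$. The entry $\langle\gamma_+,M_1\rangle$ is, up to a rational factor, the Mahler-measure regulator of $M_1$ on $E_k$ analyzed in Section~\ref{subsection:path}; by~\eqref{eqn:eqnmahlerreg} it equals $\tfrac1{2\pi}\int_S\log\abs{y_1}$ with $\abs{y_1}>1$ on a set of positive measure once $\abs{k}\gg0$, hence is strictly positive. The genuinely new point, and the main obstacle, is the nonvanishing of $\langle\gamma_-,N\rangle$, the regulator of the anti-invariant class on the second elliptic factor $F_k$. I would establish it by an asymptotic estimate as $k\to\infty$: pushing $N$ forward via~\eqref{eqn:pushforward} and tracking the dominant contribution along $\gamma_-$, the factor $(x+1)^2/x$ produces a term growing like $\log\abs{k}$ with an integer winding-number coefficient governed by the clustering of branch points in Proposition~\ref{prop:branchg2}, and a nonzero leading coefficient forces $\langle\gamma_-,N\rangle\neq0$ for $\abs{k}\gg0$. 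An alternative route is to compute $g_*(N)\in K_2^T(F_k)\otimes\Q$ explicitly and identify it with the class $\{x,y\}$ of a tempered polynomial defining $F_k$ (for the degree-$4$ families, Deninger's family $R_k$), whose regulator is a nonzero Mahler measure for large $\abs{k}$. Either way the diagonal matrix is nonsingular, giving linear independence of $M_1$ and $N$, hence of $M_1$ and $M_2$; combined with Part~(1) this yields the two independent integral elements claimed.
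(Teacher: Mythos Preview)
Your Part~(1) matches the paper's: both reduce to the argument of \cite[Theorem~8.3(3)]{Jeu06}, with the integrality of $M_2$ coming from the temperedness of $Q_k(x-1,y)$ in Lemma~\ref{lemma:tempered} transported along the isomorphism $x\mapsto x-1$.

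For Part~(2) you take a genuinely different route. The paper does not use the $\sigma$-eigendecomposition at all: it writes the family as $x(x+1)y=-t(y^2+x^3y+y+x^3)$ with $t=1/k$, invokes \cite[Lemmas~3.1,~3.4]{LC} to produce vanishing cycles $\gamma_{1,t},\gamma_{2,t}$ near the nodes $(x,y)=(0,0)$ and $(-1,0)$ of the degenerate fibre $t=0$, and reads off directly that $\langle\gamma_{i,t},M_j\rangle/\log\abs{t}\to\pm\delta_{ij}$, whence the $2\times2$ determinant is $\sim\pm(\log\abs{t})^2\neq0$ for $\abs{t}$ small. Your diagonalization via $\sigma(M_1)=M_1$, $\sigma(N)=-N$ is correct and elegant, and reduces the problem cleanly to the nonvanishing of two scalars $\langle\gamma_+,M_1\rangle$ and $\langle\gamma_-,N\rangle$; what it buys is a conceptual link to the Jacobian splitting $E_k\times F_k$ that the paper only exploits later, in Section~\ref{subsection:nonreciprocal}.

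The gap is that you do not actually establish either nonvanishing. For $\langle\gamma_+,M_1\rangle$ you appeal to Section~\ref{subsection:path}, but the identification of the Mahler integral with a regulator on a genuine cycle is only shown there for $\abs{k}\gg0$ in the first type and for \emph{one} semi-infinite direction of $k$ in the second and third types (Condition~\ref{condition:relation}(b) fails in the other direction), so this does not cover all families in Table~\ref{tab:polynomialg2} uniformly for $\abs{k}\gg0$. For $\langle\gamma_-,N\rangle$ you correctly identify it as the main obstacle, but ``a term growing like $\log\abs{k}$ with an integer winding-number coefficient'' is exactly the content of the degeneration lemma the paper cites, and neither this nor the alternative of computing ${f_2}_*(N)$ explicitly on $F_k$ is carried out. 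In effect your sketch circles back to the same asymptotic input the paper uses directly, but applied to less convenient ($\sigma$-eigen) cycles rather than to the vanishing cycles at $x=0,-1$ where the limit is immediate. To close the gap you would either need to carry out the winding-number computation for $\gamma_-$ in each family, or simply adopt the paper's choice of cycles and cite \cite{LC}.
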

\begin{proof}
The proof of (1) is similar to Theorem~8.3 (3) of \cite{Jeu06}.

We prove (2) for the family defined by $y^2 + (x^3 + kx^2 + kx + 1)y + x^3$. The proof for other families are similar. Let $t=1/k$. The family can be written as $x(x+1)y=-t(y^2+x^3y+y+x^3)$. Let $X$ be the fibred surface defined by this equation. By Lemma 3.1 of \cite{LC}, we can construct two families of closed
loops $\c_{1,t}$ and $\c_{2,t}$ in the fibres $X_t$ around $(x,y,t)=(0,0,0)$ and $(-1,0,0)$.
By Lemma 3.4 of \cite{LC},
$$\lim_{\abs{t}\to 0} \frac{\langle \c_{i,t}, M_j \rangle}{\log \abs{t}} = \pm \delta _{ij}  \quad i,j=1,2. $$
Hence, we have
$$\lim_{\abs{t}\to 0} \frac{\det(\langle \c_{i,t}, M_j \rangle)}{\abs{\log \abs{t}}^{2}} = \pm 1$$
which shows the linear independence of $M_1$ and $M_2$ for $\abs{t} \to 0$, i.e. $\abs{k} \gg 0$.
\end{proof}

\begin{remark}
Theorem \ref{thm:K2elements} also applies to curves defined by nonreciprocal families such as $x(x+1)y^2+(x^3+kx^2+(k-3)x-1)y-x(1+x)$ and $y^2+(-x^3-kx^2+(2-k)x-1)y+x^2$. For $k=1$, the polynomials define the modular curves $X_1(13)$ and $X_1(16)$ respectively and their Mahler measures are studied in \cite{Br}.
\end{remark}

\subsection{The Mahler measure of nonreciprocal families}\label{subsection:nonreciprocal} In this section, we study the Mahler measure of $Q_k(x-1,y)$ where $Q_k(x,y)$ are the families in Table \ref{tab:polynomialg2}. Note that $Q_k(x,y)$ is reciprocal, but $Q_k(x-1,y)$ is nonreciprocal.

As an example, let us first look at the following Boyd's family in detail
\begin{equation}\label{eqn:Boydfamily}
Q_k(x,y)=y^2 + (x^4 + kx^3 + 2kx^2 + kx + 1)y + x^4.
\end{equation}
Completing the square, $Q_k=0$ can be written as
$$y_1^2=x^6 + (2k - 2)x^5 + (k^2 + 3)x^4 + (2k^2 - 4)x^3 + (k^2 + 3)x^2 + (2k - 2)x + 1=\widetilde{D}_k(x)$$
where $y_1=(2y+(x^4 + kx^3 + 2kx^2 + kx + 1))/(x+1)$. Replacing $x$ by $(X+1)/(X-1)$ and $y_1$ by $2Y/(X-1)^3$. The equation becomes
\begin{equation}\label{eqn:xsquareeqn}
Y^2 = (k^2 + k)X^6 + (-2k^2 + 5k + 4)X^4 + (k^2 - 5k + 8)X^2 - k + 4 =h(X^2).
\end{equation}
where $h$ is cubic.
The family of curves $Z_k$ defined by $Q_k(x,y)=0$ is isomorphic to $Z_k'$ defined by \eqref{eqn:xsquareeqn}. The following automorphisms on the curve $Z'_k$
$$\sigma'_1:(X,Y) \mapsto (-X,Y) \quad \text{and} \quad \sigma'_2:(X,Y)\mapsto (-X,-Y)$$
correspond to automorphisms on the curve $Z_k$
$$\sigma_1:(x,y) \mapsto \left(\frac{1}{x},\frac{1}{y}\right) \quad \text{and} \quad \sigma_2:(x,y)\mapsto \left(\frac{1}{x},\frac{y}{x^4}\right).$$
Note that $\sigma_1$ is exactly $\sigma$ in \eqref{eqn:sigma}.

Let $E_k$ and $F_k$ be the elliptic curves $Z_k/\langle \sigma_1 \rangle$ and $Z_k/\langle \sigma_2 \rangle$. $E_k$ and $F_k$ are isomorphic to $Z'_k/\langle \sigma'_1 \rangle$ and $Z'_k/\langle \sigma'_2 \rangle$ which are defined by $y^2=h(x)$ and $y^2=h^*(x)=x^3h(1/x)$ respectively. The Jacobian of $Z_k$ is isogenous to $E_k\times F_k$.

Let $f_1$ and $f_2$ be the quotient maps between $Z_k$ and the quotient curves $E_k,F_k$. By equation~\eqref{eqn:pushpullK2}, we have
\begin{align}
f_1^*{f_1}_*(M_1)&=M_1\sigma(M_1)=\{x,y\}\left\{\frac{1}{x},\frac{1}{y}\right\}=2M_1,\label{eqn:pushpullM1}\\
f_2^*{f_2}_*(M_2)&=M_2\sigma_2(M_2)=\{x+1,y\}\left\{\frac{x+1}{x},\frac{y}{x^4}\right\}=2M_2-M_1.\label{eqn:pushpullM2}
\end{align}

We can pushforward the regulator integral of $M_2=\{x+1,y\}$ on $Z_k$ to $F_k$. By equations \eqref{eqn:pushpullM1},\eqref{eqn:pushpullM2} and \eqref{eqn:pushforward}, we have
\begin{align}
\frac{1}{\pi}\int_{\c}\eta(M_1)&=\frac{1}{2\pi}\int_{\c}\eta(f_1^*{f_1}_*(M_1))=\frac{1}{2\pi }\int_{f_1(\c)}\eta({f_1}_*(M_1)),\label{eqn:pushpullM1reg}\\
\frac{1}{2\pi}\left(2\int_{\c}\eta(M_2)-\int_{\c}\eta(M_1)\right)&=\frac{1}{2\pi}\int_{\c}\eta(f_2^*{f_2}_*(M_2))=\frac{1}{2\pi }\int_{f_2(\c)}\eta({f_2}_*(M_2))\label{eqn:pushpullM2reg}
\end{align}
where $\c$ is a cycle in $H_1(Z_k,\Z)$. $f_1(\c),f_2(\c)$ are cycles in $H_1(E_k,\Z),H_1(F_k,\Z)$ respectively. If $k \in \Z$, by Theorem~\ref{thm:K2elements}, $M_1$ and $M_2$ are in $K_2^T(Z_k)_{\textup{int}}\otimes \Q$. Therefore ${f_1}_*(M_1)$ and ${f_2}_*(M_2)$ are in $K_2^T(E_k)_{\textup{int}}\otimes \Q$ and $K_2^T(F_k)_{\textup{int}}\otimes \Q$ respectively. By Beilinson's conjecture, The right hand side of equations \eqref{eqn:pushpullM1reg} and \eqref{eqn:pushpullM2reg} should be rationally proportional to $L'(E_k,0)$ and $L'(F_k,0)$ respectively. Hence, by equations~\eqref{eqn:pushpullM1reg} and \eqref{eqn:pushpullM2reg}, the regulator integral of $M_2$ on $\c$
\begin{equation}\label{eqn:M2reg}
\frac{1}{2\pi}\int_{\c}\eta(M_2)=\frac{1}{8\pi}\int_{f_1(\c)}\eta({f_1}_*(M_1))+\frac{1}{4\pi}\int_{f_2(\c)}\eta({f_2}_*(M_2))
\end{equation}
should be a rational linear combination of $L'(E_k,0)$ and $L'(F_k,0)$.


Let $y_1(x), y_2(x)$ be the two roots of $Q_k(x,y)=0$. If the following condition is satisfied
\begin{condition}\label{condition:nonreciprocalcondition}
\begin{enumerate}[label=(\alph*)]
  \item $\abs{y_1(x)}\ge 1 \ge\abs{y_2(x)}$ on $\abs{x+1}=1$;
  \item If $x=0$ is not a ramification point, the number of ramification points inside the circle $\abs{x+1}=1$ is even.
\end{enumerate}
\end{condition}
Then the set $\{(x,y_1(x)) : \abs{x+1}=1\}$ is a cycle in $H_1(Z_k,\Z)$. Let $\c$ be this cycle. Applying the translation $x\mapsto x-1$ to $Q_k(x,y), \c$ and $M_2$, then by equation~\eqref{eqn:eqnmahlerreg}, we have
$$m(Q_k(x-1,y))=\frac{1}{2\pi i}\int_{\c}\eta(M_2).$$

Hence $m(Q_k(x-1,y))$ should be a rational linear combination of $L'(E_k,0)$ and $L'(F_k,0)$ if $k\in \Z$ and Condition~\ref{condition:nonreciprocalcondition} is satisfied.

We claim the condition is indeed satisfied for the Boyd's family given by \eqref{eqn:Boydfamily} for $k \in \R, \abs{k}\gg 0$. For condition (a), it is not hard to show
$$\abs{x^4+kx^3+kx^2+kx+1} \ge 1+\abs{x^4}$$
on $\abs{x+1}=1$ and the equality hold only for $x=-1$. Hence for $\abs{x+1}=1$, $y^2+(x^4+kx^3+kx^2+kx+1)y$ is the dominant term on $\abs{y}=1$. By Rouche's theorem, condition (a) holds.
For condition (b), by the proof of Proposition~\ref{prop:branchg2}, $\widetilde{D}_k(x)$ has two roots around $x=-1$, two roots around 0 and two roots with big absolute value. More precisely, we have
$$\widetilde{D}_k(x)=k^2x^2(x+1)^2+2kx(x^4+1)+(x-1)^2(x^2+1)^2$$
and $\widetilde{D}_k(0)=1$. If $k\gg 0$, $\widetilde{D}_k(x)$ change sign when $x<0$, so
the two roots around 0 are negative real roots for $k\gg 0$. Similarly the two roots around 0 are positive real roots for $k\ll 0$. Hence there are four ramification points inside the circle $\abs{x+1}=1$ for $k\gg 0$ and two ramification points inside the circle $\abs{x+1}=1$ which are on the unit circle $\abs{x}=1$ by Proposition~\ref{prop:branchg2} for $k\ll 0$.

Note that by the discussion in Section~\ref{subsection:path}, for $k\ll 0$, we have $\abs{y_1(x)}=\abs{y_2(x)}=1$ on the arc connecting the two ramification points on $\abs{x}=1$ and containing $x=-1$. The cycle $\c$ is homotopic to the path lifted from the arc with proper orientation, so we have
$$\int_{\c}\eta(M_1)=0.$$
Hence we expect $m(Q_k(x-1,y))$ to be rationally proportional to $L'(F_k,0)$ for $k \ll 0$.

We list the numerical relation between $m(Q_k(x-1,y)),L'(E_k,0)$ and $L'(F_k,0)$ for $\abs{k} \le 30$ in Table~\ref{tab:family6}. The numerical results are compatible with above analysis.


\begin{table}
\caption{Relation between $m(Q_k(x-1,y))$ and special values of $L$-functions of $E_k$ and $F_k$. $c_1,c_2,c_3$ are expected to satisfy the relation $c_1m(Q_k(x-1,y))+c_2L'(E_k,0)+c_3L'(F_k,0) = 0$, $N_{E}$ and $N_{F}$ are the conductors of $E_k$ and $F_k$ respectively. For $k=-1$, $E_{-1}$ degenerates, $N_{E}$ is blank.}
\centering
\begin{tabular}{llllll}
\hline
$k$ & $c_1$ & $c_2$ & $c_3$ & $N_{E}$ & $N_{F}$\\
\hline
-30 & -256 & 0 & 1 & 33060 & 38760\\
-29 & 204 & 0 & 1 & 15022 & 35409\\
-28 & -1 & 0 & 3 & 252 & 42\\
-27 & 16 & 0 & 1 & 2730 & 3255\\
-26 & -240 & 0 & 1 & 4420 & 26520\\
-25 & 24 & 0 & 1 & 990 & 4785\\
-24 & 2 & 0 & 1 & 138 & 336\\
-23 & -12 & 0 & 1 & 15686 & 2139\\
-22 & -128 & 0 & 1 & 13860 & 17160\\
-21 & 16 & 0 & 1 & 6090 & 3045\\
-20 & -8 & 0 & 1 & 2660 & 840\\
-19 & -6 & 0 & 1 & 342 & 1311\\
-18 & -24 & 0 & 1 & 2652 & 3432\\
-17 & 12 & 0 & 1 & 170 & 1785\\
-16 & -2 & 0 & 1 & 90 & 240\\
-15 & 40 & 0 & 1 & 4830 & 6555\\
-14 & 16 & 0 & 1 & 4004 & 1848\\
-13 & 24 & 0 & 1 & 1638 & 4641\\
-12 & 1 & 0 & -11 & 660 & 15\\
-11 & 24 & 0 & 1 & 2090 & 3135\\
-10 & -8 & 0 & 1 & 180 & 840\\
-9 & 4 & 0 & 1 & 102 & 663\\
-8 & 1 & 0 & -2 & 14 & 48\\
-7 & 8 & 0 & 1 & 630 & 1155\\
-6 & 8 & 0 & 1 & 420 & 840\\
-5 & -2 & 0 & 1 & 130 & 195\\
-4 & -1 & 0 & 4 & 36 & 24\\
-3 & -2 & 0 & 1 & 66 & 231\\
-2 & -2 & 0 & 1 & 20 & 120\\
-1 & -1 & 0 & 6 &  & 15\\
17 & 8 & 2 & 1 & 306 & 663\\
18 & 48 & -2 & -3 & 1140 & 840\\
19 & 48 & -2 & 1 & 2090 & 3135\\
20 & 24 & -1 & -132 & 1260 & 15\\
21 & 96 & -1 & 2 & 6006 & 4641\\
22 & 96 & -1 & 3 & 7084 & 1848\\
23 & 240 & -10 & 3 & 2070 & 6555\\
24 & -4 & 10 & 1 & 30 & 240\\
25 & 24 & 1 & 1 & 2210 & 1785\\
26 & -48 & 6 & 1 & 468 & 3432\\
27 & -12 & 1 & 1 & 798 & 1311\\
28 & 48 & 1 & -3 & 4060 & 840\\
29 & 864 & 4 & 27 & 18270 & 3045\\
30 & 5376 & 16 & -21 & 20460 & 17160\\
\hline
\end{tabular}
\label{tab:family6}
\end{table}

\begin{remark}
It is interesting to note that $Q_k(x,y)$ and $Q_k(x-1,y)$ define isomorphic curves, but their Mahler measures are expected to be rationally proportional to the $L$-value of the two elliptic factors $E_k$ and $F_k$ respectively for $k \in \Z$ and $k \le -1$.
\end{remark}

The elliptic factor $E_k$ is birational to the genus 1 curve defined by
\begin{equation*}
P_k(x,y)=(x+1)y^2+(x^2+(2-k)x+1)y+(x^2+x).
\end{equation*}
We find that the other elliptic factor $F_k$ is birational to Deninger's family given by
\begin{equation*}
R_k(x,y)=x+\frac{1}{x}+y+\frac{1}{y}+(k-4).
\end{equation*}
By Beilinson's conjecture, $m(P_k)$ and $m(R_k)$ are expected to be rationally proportional to $L'(E_k,0)$ and $L'(F_k,0)$ respectively for $k\in \Z$. Hence $m(Q_k(x-1,y))$ is expected to be a rational linear combination of $m(P_k)$ and $m(R_k)$ for $\abs{k}\gg 0$. Note that the relation between the Mahler measures does not involve arithmetic, it should be valid for $k \in \R$.
We find numerically
\begin{align}\label{eqn:nonreciprocalrelation}
m(Q_{k}(x-1,y)) \stackrel{?}{=}
\begin{cases}
  m(R_{k}) \quad   k\le -1,\\
  \frac{1}{2}(m(P_{k})+m(R_{k})) \quad  k \ge 17.
\end{cases}
\end{align}

\begin{remark}
If one can prove \eqref{eqn:nonreciprocalrelation} which seems to be feasible by extending the ideas in \cite{LW}, then by the evaluation of $m(R_{-1}),m(R_{-4})$ and $m(R_{-12})$ in \cite{La},\cite{LR},\cite{RZ} and \cite{RZ14}, we will have
\begin{align*}
& m(Q_{-1}(x-1,y)) \stackrel{?}{=} 6L'(F_{-1},0),\quad m(Q_{-4}(x-1,y)) \stackrel{?}{=} 4L'(F_{-4},0),\\
& m(Q_{-12}(x-1,y)) \stackrel{?}{=} 11L'(F_{-12},0).
\end{align*}
\end{remark}

The Boyd's family is family 6 in Table \ref{tab:polynomialg2}. We numerically find rational relations between $m(Q_k(x-1,y))$ and the special values of $L$-functions of corresponding elliptic factors as the Boyd's family for families $1,2,5,7,8,11,12$ in Table \ref{tab:polynomialg2}. Please see \cite{LQGithub} for the data of these families. We do not find similar relation for the families $3,4,9,10,13,14$ because they do not satisfy Condition~\ref{condition:nonreciprocalcondition}. For example, we have $y_1(0)=y_2(0)=0$ for family 3, (a) of Condition~\ref{condition:nonreciprocalcondition} does not hold.

We also find tempered families of genus 1 such that these families are birational to the elliptic factors of families $1,2,5,7,8,11,12$ in Table \ref{tab:polynomialg2}. We numerically find similar relations as \eqref{eqn:nonreciprocalrelation}. These relations are summarized in Table \ref{tab:quotientfamilies}. For families 11 and 12, we do not find tempered families of genus 1 which are birational to the quotient curves $Z_k/\langle\sigma_1\rangle$. But we numerically find $2m(Q_k(x-1,y)) \overset{?}{=} m(Q_k(x,y)) + m(R_k)$ for family 11 when $k\ge 14$.
\begin{table}
\caption{No. of $Q_k$ is as in Table \ref{tab:polynomialg2}. $P_k$ and $R_k$ are tempered families birational to the elliptic factors of $Q_k$. Numerically $m(Q_k(x-1,y))\stackrel{?}{=}m(R_k)$ for $k\le k_1$ and $2m(Q_k(x-1,y))\stackrel{?}{=}m(P_k)+m(R_k)$ for $k\ge k_2$.}
\centering
\begin{tabular}{cccc}
  \hline
  No. of $Q_k$  & Defining Polynomials of Quotient Curves & $k_1$ & $k_2$ \\
    \hline
  1 &  $\begin{aligned}
    P_{k} &=y^2 + (2x^2 + (1 - k)x + 1)y + x(x-1)(x+1)^2 \\
    R_{k} &=y^2 + ((k - 3)x + 1)y + x^3
    \end{aligned}$  & 0 & 7 \\
  \hline
  2 &  $\begin{aligned}
    P_{k} &= y^3 + (-x + 3)y^2 - (x^2 + (-k + 1)x - 3)y \\
    &+ (x+1)(x^2+x+1)\\
    R_{k} &= y^2 + (x^2 + (-k + 3)x - 1)y + x^2(x - 1)
    \end{aligned}$ & 2 & 8 \\
  \hline
  5 &  $\begin{aligned}
    P_{k} &= y^2 + (x^2 + (k - 2)x + 1)y + x(x - 1)^2\\
    R_{k} &= y^2 + (4 - k)xy + x(x - 1)^2
    \end{aligned}$ &-8 & 9 \\
  \hline
  6 &  $\begin{aligned}
    P_{k} &= (x + 1)(y^2+x) + (x^2 + (2-k)x + 1)y\\
    R_{k} &= y^2 + (x^2 + (k - 4)x + 1)y + x^2
    \end{aligned}$ &-1 & 17 \\
  \hline
  7 &  $\begin{aligned}
    P_{k} &= y^2 + (k - 2)xy - (x-1)^2(x^2+x+1)\\
    R_{k} &= y^2 + ((k - 4)x + 2)y - (x-1)(x+1)^2
    \end{aligned}$ &-7 & 8 \\
  \hline
  8 & $\begin{aligned}
    P_{k} &=(x^2 + x + 1)(y^2+1) + (x^2 + (-k + 2)x + 1)y \\
    R_{k} &=(x + 1)(y^2+x) + (x^2 + (-k + 4)x + 1)y
    \end{aligned}$ & 1 & 16 \\
  \hline
  11 & $\begin{aligned}
    R_{k} &= y^2 + (k - 8)xy + (x^2+1)^2
    \end{aligned}$ & -4 & \\
  \hline
  12 & $\begin{aligned}
    R_{k} &=(x+1)^2y^2 + (2x^2 + (-k + 8)x + 2)y + (x+1)^2
    \end{aligned}$ & 4 & \\
  \hline
\end{tabular}
\label{tab:quotientfamilies}
\end{table}

\section{Polynomials defining genus 3 curves}\label{section:genus3}
\subsection{Constructing the polynomials}
As in the genus 2 case, we consider three types of polynomials $P_k(x,y)$
\begin{align*}
&A(x)y^2+(B(x)+k(x^4+1)+l(x^3+x^2))y+C(x),\\
&A(x)y^2+(B(x)+k(x^5+x)+l(x^4+x^2)+mx^3)y+C(x),\\
&A(x)y^2+(B(x)+a(x^7+x)+k(x^6+x^2)+l(x^5+x^3)+mx^4)y+C(x).
\end{align*}
We require $P_k(x,y)$ to satisfy the following condition as Condition \ref{condition:Pconditiong2} for the genus 2 families.
\begin{condition}\label{condition:Pconditiong3}
$P_k(x,y)$ are tempered, reciprocal and define curves $Z_k$ generically of genus 3. The quotient curve of $Z_k$ by the automorphism $\sigma: (x,y)\mapsto (1/x,1/y)$ is a curve generically of genus 1.
\end{condition}

$A(x)$ is 1 or a product of cyclotomic polynomials. Let $d=5, 6$ and 8 for the first, second and third type respectively. Then $\deg(A)\le d$ and $C(x)=x^{d}A(1/x)$.
For the first type $B(x)$ is equal to 0 or $x^5+1$ and for the second type $B(x)$ is equal to 0, $x^6+1$ or $2x^6+2$. For the third type, $\deg(A)=8$, $B(x)$ is equal to $2x^8+2$ and $a$ is the coefficient of $x$ in $A(x)$.

As in the genus 2 case, let $B_k(x)$ be the coefficient of $y$ in $P_k(x,y)$ and
$\Delta_k(x)=B_k(x)^2-4A(x)C(x)$
which is reciprocal. For the third type, $x^2\mid \Delta_k(x)$ and $\Delta_k(x)/x^2$ is a reciprocal polynomial of degree 12. Let $D_k(x)$ be $\Delta_k(x)$ for the first and second type and
$\Delta_k(x)/x^2$ for the third type.

If these polynomials define curves generically of genus 3, then there is a polynomial $f(x)$ such that $f(x)^2|D_k(x)$ where $f$ is of degree 1 for the first type and degree 2 for the second and third type respectively.

Completing the square, $P(x,y)=0$ can be written as $(2A(x)y+B_k(x))^2=\Delta_k(x)$.
Substituting
$$X=\frac{x+1}{x-1}, Y=\frac{2A(x)y+B_k(x)}{x^{\e}f(x)(x-1)^4}$$
where $\e=1$ for the third type and $\e=0$ otherwise,
the equation reduces to the form $Q(X,Y)=Y^2-h(X^2)=0$ where $h$ is quartic.

We want the automorphism $\sigma:(x,y)\mapsto (1/x,1/y)$ on $Z_k$ to be transformed to $(X,Y)\mapsto (-X,Y)$ on the curve defined by $Q(X,Y)$ so that $Z_k/\langle \sigma \rangle$ is birational to the curve defined by $y^2=h(x)$ which is generically of genus 1. This requires $f$ to be antireciprocal, hence $f(x)=x-1$ for the first type and $f(x)=x^2-1$ for the second and third type. So $A(x)$ is reciprocal since otherwise $(x-1) \mid A(x)$ and $(x-1) \mid D_k(x)$ which is impossible.

We claim that the degree of $A(x)$ is even. For the first type, if $\deg(A)$ is odd then both $A(x)$ and $B_k(x)$ are divisible by $x+1$ which implies $(x+1)\mid P_k(x,y)$.
For the second and third type, if $A$ is of odd degree then $(x+1)|A$ and $(x+1)^2\mid D_k(x)$, so $(x+1)\mid B_k(x)$ which also implies $(x+1)\mid P_k(x,y)$.

As in the genus 2 families, we can express $l$ and $m$ as linear functions of $k$ by solving $D(1)=0$ for the first type and $D(\pm 1)=0$ for the second and third type. The degree of $A(x)$ also must satisfy certain condition. For example, if $B(x)=0$ for the first type, then $\deg(A)=6$ since $\deg(A(x))$ is even and the lattice points corresponding to $kx^3y,lx^2y,lxy,ky$ must be the interior points of the Newton polygon.

We summarize above discussions in the following Proposition.
\begin{prop}\label{prop:constructiong3}
Let notations be as above. Then $P_k(x,y)$ satisfies Condition \ref{condition:Pconditiong3} if $A$ is reciprocal and of even degree and $A(x), B(x)$ satisfy the following:
\begin{enumerate}
  \item for the first type, $(x+1)\nmid A$, $l=-k-B(1)/2+\delta A(1)$ where $\delta=\pm 1$ and
  \begin{itemize}
    \item if $B=0$, then $\deg(A)=4$;
    \item if $B=x^5+1$, then $\deg(A)=0,2$ or $4$;
  \end{itemize}
  \item for the second type, $A(\pm 1)\neq 0$, $l=\frac{-B(1)+\delta_1A(1)+\delta_2A(-1)}{2}$ and $m=-2k + \delta_1 A(1)- \delta_2 A(-1)$ where $\delta_1=\pm 1, \delta_2=\pm 1$ and
  \begin{itemize}
    \item if $B=0, 2x^6+2$, then $\deg(A)=6$;
    \item if $B=x^6+1$, then $\deg(A)=0,2,4$ or $6$;
  \end{itemize}
  \item for the third type, $a$ is the coefficient of $x$ in $A(x)$, $A(\pm 1)\neq 0$, $l=\frac{-4a+\delta_1A(1)+\delta_2A(-1)}{2}$ and $m=-2k-4 + \delta_1 A(1)-\delta_2A(-1)$ where $\delta_1=\pm 1, \delta_2=\pm 1$.
\end{enumerate}
\end{prop}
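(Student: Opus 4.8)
The plan is to check, for all three types in parallel, each clause of Condition~\ref{condition:Pconditiong3} --- reciprocity, genus $3$, and the genus-$1$ quotient --- together with temperedness, and in the course of doing so to read off the formulas for $l$ and $m$. Reciprocity is immediate from the construction: every coefficient of $y$ is built from the palindromic blocks $x^i+x^{d-i}$ and the reciprocal $B(x)$, while $C(x)=x^dA(1/x)$ is the reciprocal of $A(x)$, so $P_k(x,y)/P_k(1/x,1/y)$ is a monomial once $A$ is reciprocal. The substantive content is therefore the genus computation, the location of the forced double factor of the branch polynomial $D_k$, and the temperedness constraint on $\deg A$.

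For the genus and the $l,m$ formulas I would argue as follows. Completing the square presents $Z_k$ as the double cover $(2A(x)y+B_k(x))^2=\Delta_k(x)$ branched over the roots of $D_k$. Requiring $Z_k/\langle\sigma\rangle$ to be genus $1$ through the substitution $X=(x+1)/(x-1)$, $Y=(2A(x)y+B_k(x))/(x^\epsilon f(x)(x-1)^4)$ forces the removed factor $f$ to be antireciprocal, hence $f=x-1$ for the first type and $f=x^2-1$ for the second and third; it also forces $A$ reciprocal, since $(x-1)\mid A$ and $(x-1)\mid D_k$ would give $(x-1)\mid P_k$. Imposing $D_k(1)=0$ (first type) or $D_k(\pm1)=0$ (second and third types) is a linear condition in $l$ (and $m$); using $C(\pm1)=A(\pm1)$ it reads $B_k(\pm1)=\pm2A(\pm1)$, and solving the resulting linear system produces exactly the stated formulas with sign choices $\delta,\delta_1,\delta_2=\pm1$. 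The crucial point is that these \emph{single} root conditions already give the \emph{double} divisibility $f^2\mid D_k$: since $D_k$ is reciprocal and $f$ is antireciprocal, the quotient $D_k/f$ is antireciprocal of even degree and hence vanishes again at each of $\pm1$, upgrading every simple root to a double one for free.

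It then remains to identify the two genera. Under the above substitution the equation becomes $Y^2=h(X^2)$ with $h$ quartic, on which $\sigma$ acts as $(X,Y)\mapsto(-X,Y)$; thus $Z_k/\langle\sigma\rangle$ is birational to $y^2=h(x)$, a quartic double cover and so generically of genus $1$. For $Z_k$ itself, $\widetilde D_k=D_k/f^2$ has degree $8$ in each type, so the model $Y^2=\widetilde D_k$ is a degree-$8$ hyperelliptic curve of genus $3$, provided $\widetilde D_k$ is generically squarefree. The requirement that $\deg A$ be even enters here: an odd-degree reciprocal $A$ is divisible by $x+1$, which together with the branch conditions forces $(x+1)\mid P_k$ and makes the curve reducible, exactly as in the discussion preceding the proposition.

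The last and, I expect, hardest step is temperedness, which demands that every face polynomial of the Newton polygon of $P_k$ have only roots of unity. The outer faces come from $A$, $C$, and $B$, all products of cyclotomics, so the real constraint is that the monomials $kx^\bullet y$, $lx^\bullet y$, $mx^\bullet y$ correspond to \emph{interior} lattice points of the Newton polygon, creating no new face; this is precisely what pins down the admissible values of $\deg A$ in each sub-case. The main obstacle is the case analysis here: for each type and each candidate degree of $A$ one must verify that all of the $k,l,m$ monomials are genuinely interior and that the vertex and edge polynomials reduce to cyclotomics, and conversely that a degree outside the stated range produces a non-tempered edge. Granting this, the conditions listed are sufficient, the requirement $f\nmid A$ ruling out the degenerate possibility $f\mid P_k$.
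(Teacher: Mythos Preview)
Your proposal is correct and follows essentially the same route as the paper: the argument for Proposition~\ref{prop:constructiong3} is the discussion immediately preceding it (mirroring the genus~2 treatment before Proposition~\ref{prop:constructiong2}), and you reproduce each step---completing the square, the substitution $X=(x+1)/(x-1)$, the antireciprocity forcing $f=x-1$ or $x^2-1$, the parity of $\deg A$, the linear solve for $l,m$ from $D_k(\pm1)=0$, and the interior-lattice-point reading of temperedness. Your explicit justification that a single root of $D_k$ at $\pm1$ upgrades to a double root (via $D_k/f$ being antireciprocal) is the same mechanism the paper spells out only in the genus~2 case and tacitly carries over here. One small correction: $\widetilde D_k$ need not have degree exactly $8$---the paper notes $\widetilde d\in\{7,8\}$ just before Proposition~\ref{prop:branchg3}---but either value still yields a genus~$3$ hyperelliptic model, so your conclusion stands.
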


\begin{remark}
For the second type, if $B(x)=0$, we can choose $\delta_1=1$ by letting $y\rightarrow -y$ and $k\rightarrow -k$. Furthermore, if $A(x)=A(-x)$ and $l=-B(1)/2$ for the second type then we can choose $m=-2k+2A(1)$ by letting $x\rightarrow -x, k\rightarrow -k$; if $A(x)=A(-x)$ and $m=-2k-4$ for the third type, then $a=0$ and we can choose $l=A(1)$ by letting $x\rightarrow -x$.
\end{remark}

\begin{remark}
Consider the families of genus 3 curves $Z_k$ defined by
\begin{align*}
&y^2+(x^5+1+k(x^4+x)+(-k-1\pm 1)(x^3+x^2))y+x^5,\\
&y^2+(x^6+1+k(x^5+x)-(x^4+x^2)+(-2k+2)x^3)y+x^6,\\
&y^2+(x^6+1+k(x^5+x)-(1 \pm 1)(x^4+x^2)-2kx^3)y+x^6.
\end{align*}
The elements $\{x,y\},\{x-1,y\},\{x+1,y\}$ are in $K_2^T(Z_k)\otimes \mathbb{Q}$. One can show that the elements are linearly independent for $\abs{k}\gg 0$ as in Theorem~\ref{thm:K2elements}.
\end{remark}

As in the genus 2 case, we have the distribution of the branch points namely the roots of $\widetilde{D}(x)=D_k(x)/f(x)^2$ as $k\in \R, \abs{k}\gg 0$ in the following Proposition. Note that $\widetilde{d}=\deg(\widetilde{D}(x))$ is 7 or 8 and $x\mid D(x)$ if $\deg(\widetilde{D}(x))$ is 7.

\begin{prop}\label{prop:branchg3}
Let notations be as above and $k \in \R$. Then
\begin{enumerate}
  \item for the first type, the distribution of branch points is $(2,4,\widetilde{d}-6)$ if $\delta k\gg 0$ and $(3,2,\widetilde{d}-5)$ if $\delta k\ll 0$;
  \item for the second and third type,
  \begin{itemize}
    \item if $\delta_1\delta_2=-1$, the distribution of branch points is $(2,4,\widetilde{d}-6)$ for $\delta_1 k\gg 0$ and $(4,0,\widetilde{d}-4)$ for $\delta_1 k\ll 0$;
    \item if $\delta_1\delta_2=1$, the distribution of branch points is $(3,2,\widetilde{d}-5)$ for $\abs{k} \gg 0$.
  \end{itemize}
\end{enumerate}
\end{prop}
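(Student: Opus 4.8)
The plan is to mirror the proof of Proposition~\ref{prop:branchg2}, replacing the single distinguished point $x=1$ of the genus~2 analysis by the two points $x=\pm1$ forced on us by the factor $f(x)=x^2-1$ (second and third type) or by the shape of the dominant term (first type). I would carry out the second type with $f(x)=x^2-1$ in detail; the first type ($f=x-1$) is entirely analogous, and the third type reduces to the second once one divides $\Delta_k$ by $x^2$, as in the definition of $D_k$.

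First I substitute the values of $l$ and $m$ from Proposition~\ref{prop:constructiong3} into $B_k(x)$ and read off the coefficient of the top power of $k$. For the second type one finds $\partial B_k/\partial k = x^5-2x^3+x = x\,f(x)^2$, so that the coefficient of $k^2$ in $\Delta_k=B_k^2-4AC$ is $x^2 f(x)^4$; after dividing by $f(x)^2$ the dominant term of $\widetilde{D}_k=D_k/f^2$ is
\[
k^2\,x^2(x-1)^2(x+1)^2 .
\]
The first type gives $\partial B_k/\partial k = x(x+1)(x-1)^2$, and the third type $\partial B_k/\partial k = x^2 f(x)^2$ together with the normalization $D_k=\Delta_k/x^2$; in every case the same dominant term results. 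Thus the $k^2$-part has double roots exactly at $x=0,1,-1$, and since $\widetilde{D}_k$ is reciprocal (the quotient of the reciprocal $D_k$ by the reciprocal $f^2$), I can apply Rouch\'e's theorem on small disks about these three points: for $\abs{k}\gg0$ there are two roots near each of $0,1,-1$, the two near $0$ lie strictly inside the unit circle, and the remaining $\widetilde{d}-6$ roots have large absolute value and are the reciprocals of the roots near $0$, lying outside (in the degenerate case $\widetilde{d}=7$ the reciprocal polynomial of odd degree has $-1$ as a forced root and the bookkeeping is adjusted accordingly).

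The heart of the argument is to decide, for the pair of roots near $1$ and the pair near $-1$, whether they sit on the unit circle or split into one interior and one exterior real root. Because the dominant term $k^2x^2(x-1)^2(x+1)^2$ is a nonnegative square, strictly positive for real $x\notin\{0,\pm1\}$, the sign of $\widetilde{D}_k$ at $\pm1$ decides this: a negative value forces two nearby real roots straddling the point (one inside, one outside), while a positive value makes the nearby pair non-real, and then reciprocity together with complex conjugation — there being only two roots near the point — pins them to the unit circle, exactly as in Proposition~\ref{prop:branchg2}. A short Taylor expansion at the double zeros of $f$ (using that $B_k(1)=2\delta_1A(1)$ and $B_k(-1)=2\delta_2A(-1)$ are both independent of $k$) gives
\[
\widetilde{D}_k(1)\sim 4\delta_1 A(1)\,k,\qquad \widetilde{D}_k(-1)\sim -4\delta_2 A(-1)\,k .
\]
Since $A$ is a product of cyclotomic polynomials with $A(\pm1)\ne0$, we have $A(1)>0$ and $A(-1)>0$; hence the pair near $1$ lies on the circle iff $\delta_1k>0$ and the pair near $-1$ lies on the circle iff $\delta_2 k<0$. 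For the first type the same computation gives $\widetilde{D}_k(1)\sim 8\delta A(1)k$ and the $k$-independent value $\widetilde{D}_k(-1)=A(-1)^2>0$, so the pair near $-1$ is always on the circle and only the pair near $1$ switches with the sign of $\delta k$.

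Assembling the counts then yields the Proposition: two pairs on the circle give $(2,4,\widetilde{d}-6)$, one pair on the circle gives $(3,2,\widetilde{d}-5)$, and no pair on the circle gives $(4,0,\widetilde{d}-4)$. When $\delta_1\delta_2=-1$ the two on-circle conditions $\delta_1k>0$ and $\delta_2k<0$ coincide, so both pairs are on the circle for $\delta_1k\gg0$ and both split for $\delta_1k\ll0$; when $\delta_1\delta_2=1$ the two conditions are opposite, so for every large $\abs{k}$ exactly one pair lies on the circle. This is precisely the stated trichotomy, and the first type falls out as described. I expect the main obstacle to be exactly this sign bookkeeping at the two points simultaneously: the opposite signs in $\widetilde{D}_k(1)$ and $\widetilde{D}_k(-1)$ — which is what makes the dichotomy depend on $\delta_1\delta_2$ — must be pinned down carefully, and one must also justify that the near-$0$ roots genuinely stay inside while their reciprocals account for all the large roots, so that the remaining counts are forced.
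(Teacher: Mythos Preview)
Your proposal is correct and follows precisely the approach the paper intends: the paper's own proof consists of the single sentence ``The proof is similar to the proof of Proposition~\ref{prop:branchg2},'' and you have carried out that similarity in full detail, with the correct identification of the dominant term $k^2x^2(x-1)^2(x+1)^2$, the correct linear-in-$k$ asymptotics $\widetilde{D}_k(1)\sim 4\delta_1 A(1)k$ and $\widetilde{D}_k(-1)\sim -4\delta_2 A(-1)k$, and the correct reciprocity argument pinning the non-real pairs to the unit circle.

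One small slip worth fixing: in your parenthetical about the case $\widetilde{d}=7$, you say a reciprocal polynomial of odd degree forces $-1$ as a root. But $\widetilde{D}_k$ satisfies $x^{8}\widetilde{D}_k(1/x)=\widetilde{D}_k(x)$ (it is reciprocal of \emph{virtual} degree $8$, inherited from $D_k$ and $f^2$), not $x^{7}\widetilde{D}_k(1/x)=\widetilde{D}_k(x)$. So what is forced when $\widetilde{d}=7$ is that the $x^0$-coefficient vanishes, i.e.\ $x\mid \widetilde{D}_k$, exactly as the paper notes; the ``missing'' large root is then the reciprocal of the unique nonzero root near $0$, while the root at $0$ pairs with the point at infinity. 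Adjust the bookkeeping for the odd-$\widetilde{d}$ case accordingly. This does not affect the sign analysis at $\pm1$, which is the heart of the argument and which you have right.
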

\begin{proof}
The proof is similar to the proof of Proposition~\ref{prop:branchg2}.
\end{proof}

\subsection{Relation between the Mahler measure and special value of $L$-function}
Let $f:Z_k \rightarrow Z_k/\langle \sigma \rangle$ be the map between $Z_k$ and the quotient curve
$Z_k/\langle \sigma \rangle$ which is defined by $y^2=h(x)$ where $h=a_4x^4 + a_3x^3 + a_2x^2 + a_1x + a_0$. It has genus 1 in general and its Jacobian $E_k$ is given by $y^2=x^3+a_2x^2+(a_3a_1-4a_4a_0)x-(4a_4a_2a_0-a_3^2a_0-a_4a_1^2)$ (see \cite[page 57]{Boy98}).

As in the genus 2 case, $M=\{x,y\}\in K_2^T(Z_k)\otimes\Q$ and
$$f^*f_*(M)=M\sigma(M)=\{x,y\}\left\{\frac{1}{x},\frac{1}{y}\right\}=2M.$$
Hence by the discussions in Section~\ref{subsection:path}, we also expect that $m(P_k(x,y))/L'(E_k,0) \in \Q^{\times}$ if Condition~\ref{condition:relation} is satisfied.
For the curves $Z_k$, (a) of Condition~\ref{condition:relation} is satisfied for $k \in \Z$.

By Proposition~\ref{prop:branchg3}, there are either four or two branch points on the unit circle as $\abs{k} \gg 0$. If there are four branch points $P_1,P_2,\overline{P_1},\overline{P_2}$ on $\abs{x}=1$, as in the genus 2 case, one can show
that the two arcs $P_1P_2,\overline{P_1}\overline{P_2}$ contribute to the Mahler measure. Let $\c$ be the lift of the arc $P_1P_2$ to $Z_k$. Then $\c \in H_1(Z_k,\Z)$ and $\c$ is the union of $S=\{(x,y_1(x)) : x\in P_1P_2,y_1(x)\ge 1\}$ and $\{(x,y_1(x)) : x\in P_1P_2,y_2(x)\le 1\}$. Since $\abs{y_1(x)y_2(x)}=1$ on $\abs{x}=1$, we again have
$$\int_{\c}\eta(\{x,y\}) = 2\int_{S}\eta(\{x,y\})$$
with a proper orientation of $\c$. Since the two arcs $P_1P_2$ and $\overline{P_1}\overline{P_2}$ contribute equally to the Mahler measure, (b) of Condition~\ref{condition:relation} is satisfied.

On the other hand, if there are two branch points on $\abs{x}=1$, the arc which contributes to the Mahler measure passes through the singularity. So $y_1(x)$ is not a branch of $y(x)$, (b) does not hold.

Hence by Proposition~\ref{prop:branchg3}, when $k\in \Z$, we expect $m(P_k(x,y))/L'(E_k,0) \in \Q^{\times}$ for a semi-infinite interval if $P_k(x,y)$ is of the first type or if $P_k(x,y)$ is of the second and third type and $\delta_1,\delta_2$ have different sign; we expect $m(P_k(x,y))/L'(E_k,0) \in \Q^{\times}$ for finitely many $k$ if $\delta_1,\delta_2$ have the same sign. Again, this is compatible with our numerical calculation. In the following, we look at several example. For data of these families, please see \cite{LQGithub}.

\begin{example}[$m(P_k(x,y))/L'(E_k,0) \in \Q^{\times}$ for finitely many $k$]\label{example:finiteinterval}
Let
$$P_k(x,y)=y^2+(x^6 + kx^5  - 2kx^3 +  kx + 1)+x^6,$$
we numerically find $m(P_k(x,y))/L'(E_k,0) \in \Q^{\times}$ only for $k=\pm 1,\pm 2$; let
$$P_k(x,y)=(x^2 + x + 1)^{2}(y^2+x^2)+(x^6 + kx^5 + 4x^4 + (-2k +8) x^3 + 4x^2 + kx + 1)y,$$
we find numerically $m(P_k(x,y))/L'(E_k,0) \in \Q^{\times}$ only for $k=0,1$. For these two families, we have $\delta_1=\delta_2=1$ which is compatible with above analysis.
\end{example}

\begin{example}[$m(P_k(x,y))/L'(E_k,0) \in \Q^{\times}$ for a semi-infinite interval $k\in \Z$]\label{example:exampletypee} Let $P_k(x,y)$ be
$$(x^4 + x^3 + x^2 + x + 1)(x^2 - x + 1)(y^2+1)+(x^6 + kx^5 - 2x^4 - (2k + 8) x^3 - 2x^2 + kx + 1)y,$$
then we find numerically $m(P_k(x,y))/L'(E_k,0) \in \Q^{\times}$ for $k\in \Z$ and $k \le -7$. We have $\delta_1=-1,\delta_2=1$ which is again compatible with above analysis. The data for this family is summarized in Table~\ref{tab:exampleinfinite}. It is interesting to note that numerically
$m(P_{0}(x,y))=2/15d_{15}+2/15d_{24}-1/90d_{39}$ which is the linear combination of 3 terms. The discriminant of $P_0(x,y)$ is
$$-3(x-1)^2(x+1)^2(x^2+x+1)^2(x^4 - 2x^3 + 7x^2 - 2x + 1),$$
so there are singularities on $\abs{x}=1$ with
$x=-1,1,(-1\pm\sqrt{-3})/2$
and all these points contribute to the Mahler measure.
\end{example}

\begin{example}[Chinburg's conjecture for $d_{23}, d_{303}$ and $d_{755}$]\label{example:chinburg}
Let
$$P_k(x,y)=(x^4 - x^2 + 1)(y^2+x^2)+(x^6 - kx^5 + 2kx^3 - kx + 1)y,$$
we find numerically
$m(P_6(x,y)) \stackrel{?}{=} d_{23}/6$.
The curve defined by $P_6(x,y)$ has a singularity at $(1,-1)$ and corresponding field of normalization is $\Q(\sqrt{-23})$. One can prove $m(P_6(x,y))/d_{23} \in \Q^{\times}$ by applying Theorem 3 of \cite{BV}.
This gives a new conductor $f=23$ of Chinburg's conjecture. Similarly, let $P_k(x,y)$ be
$$(x^8 + x^7 + x^6 + x^2 + x + 1)(y^2+1)+(2x^8 + 2x^7 -kx^6 + 2x^5 +2kx^4 + 2x^3 -kx^2 + 2x + 2)y,$$
then numerically $m(P_{49}(x,y) \stackrel{?}{=} d_{303}/132$; let $P_k(x,y)$ be
$$(x^8 + x^6 + x^4 + x^2 + 1)(y^2+1)+(2x^8 - kx^6 + 5x^5 + (2k-4)x^4 + 5x^3 - kx^2 + 2)y,$$
then numerically $m(P_{37}(x,y) \stackrel{?}{=} d_{755}/410$. By applying Theorem 3 of loc.\ cit.\ again to these examples, we have two new conductors $303$ and $755$ of Chinburg's conjecture. Note that these examples are in the same shape as Example 8 of \cite{BV}.
\end{example}

\begin{table}
\caption{Data for Example \ref{example:exampletypee}. The $s$ column gives the value of $s\stackrel{?}{=}L'(E_k,0)/m(P_k)$. $N_E$ is the conductor of $E_k$ and the remaining columns give the coefficients of the reduced minimal model, $y^2+a_1xy+a_3y=x^3+a_2x^2+a_4x+a_6$, of $E_k$.}
\centering
\begin{tabular}{|l|l|l|lllll|}
\hline
$k$ & $s$ & $N_{E}$ & $a_1$ & $a_2$ & $a_3$ & $a_4$ & $a_5$\\
\hline
-40&717840&100663680&0&1&0&-59145&3182823\\
-39&1964640&231261030&1&-1&1&-53708&2699727\\
-38&-777600&142895200&0&1&0&-48658&2222688\\
-37&4652160&436187598&1&0&0&-43978&1849364\\
-36&8640&1127808&0&0&0&-2478&23780\\
-35&1041360&137584510&1&0&0&-35650&1236420\\
-34&15882480&1855967520&0&1&0&-31966&989120\\
-33&308160&34595550&1&-1&1&-28580&805047\\
-32&57960&10146752&0&1&0&-25473&620575\\
-31&746520&59849034&1&0&0&-22631&487113\\
-30&-245760&32891040&0&0&0&-20037&373844\\
-29&-1024920&145450370&1&0&0&-17676&278656\\
-28&-12960&1478400&0&1&0&-15533&199563\\
-27&47160&4418622&1&-1&1&-13595&148299\\
-26&2546040&381944992&0&1&0&-11846&94192\\
-25&-725760&60598230&1&0&0&-10275&61425\\
-24&12240&1653120&0&0&0&-8868&35408\\
-23&13680&1682450&1&0&0&-7613&15217\\
-22&386400&47411232&0&1&0&-6498&0\\
-21&156960&11993058&1&-1&1&-5513&-5511\\
-20&-2160&315520&0&1&0&-290&-290\\
-19&-86160&7439070&1&0&0&-3886&-15484\\
-18&-8160&957600&0&0&0&-3225&-16000\\
-17&-282600&30596566&1&0&0&-2653&-15711\\
-16&-3900&331968&0&1&0&-2161&-14833\\
-15&-3240&266310&1&-1&1&-1742&-11811\\
-14&-71280&9449440&0&1&0&-1386&-10640\\
-13&-1200&79950&1&0&0&-1088&-8208\\
-12&-240&25344&0&0&0&-840&-6208\\
-11&7260&527758&1&0&0&-636&-4592\\
-10&-14400&1145760&0&1&0&-470&-3312\\
-9&-900&63630&1&-1&1&-338&-1983\\
-8&-30&3200&0&1&0&-233&-1337\\
-7&1020&48678&1&0&0&-153&-711\\
-4&$m=1/30d_{55}$&&\multicolumn{5}{c|}{}\\
-3&$m=10/3d_{3}$&&\multicolumn{5}{c|}{}\\
0&$m=2/15d_{15}+2/15d_{24}-1/90d_{39}$&&\multicolumn{5}{c|}{}\\
12&$m=1/15d_{15}+1/90d_{183}$&&\multicolumn{5}{c|}{}\\
\hline
\end{tabular}
\label{tab:exampleinfinite}
\end{table}

\subsection{Relations between Mahler measures of families of polynomials}\label{subsection:familyrelation}
Let $Z_k$ be the curve defined by the reciprocal families of genus 2 and 3 studied in this article and $\sigma:(x,y) \mapsto (1/x,1/y)$ be the automorphism of $Z_k$. Then the Jacobian of $Z_k$ has an genus 1 factor $Z_k/\langle \sigma \rangle$ and the Mahler measure of the families are related to the $L$-values of the Jacobian of $Z_k/\langle \sigma \rangle$. If two families have a common factor, we expect the Mahler measures of the families are rationally proportional to each other for $k$ in a suitable range. Boyd \cite{Boy98} numerically found two example relations of this kind which are proved by Bertin, Zudilin \cite{BZ16, BZ17} and Lalin, Wu \cite{LW} using different methods.

We compare the the Mahler measures of the families in this article and tempered genus 1 families and numerically find more than 100 identities of Boyd's type. In the following, we give several examples. For a list of the relations of this kind we find, please see \cite{LQGithub}.

\begin{example}[Relations involving reciprocal families of genus 1] One of Boyd's relations \eqref{eqn:Boydrelation} involves a reciprocal family of genus 1. We find several other relations involving reciprocal family of genus 1. The relations are listed in Table~\ref{tab:relationreciprocalg1}. Note that the first two rows involve genus 2 families, the last two rows involve genus 3 families and seem to be valid only for a finite interval which is compatible with Example~\ref{example:finiteinterval}.
\end{example}

\begin{table}
\caption{Relations between the Mahler measures involving reciprocal families of genus 1.}
\centering
\begin{tabular}{c}
\hline
\(\displaystyle P_k=(x^2
 + x
 + 1)(y^2+x^2)+(x^4
 + k\*x^3
 + 2\*k\*x^2
 + k\*x
 + 1)y\)\\
\(\displaystyle Q_k=(x
 + 1)(x^2
 - x
 + 1)(y^2+x)+(k\*x^3
 + \left(-2\*k
 + 4\right) \*x^2
 + k\*x)y\)\\
\(\displaystyle R_k=(x^2
 + x
 + 1)(y^2+1)+(x^2
 + k\*x
 + 1)y\)\\
\(\displaystyle m(Q_k) = m(P_{-k
 + 2}),k \ge 5 \)\\
\(\displaystyle m(R_k) = m(P_{-k
 + 2}),k \ge 2  \)\\
\(\displaystyle m(R_k) = m(Q_{k}),k \ge 5 \)\\
\hline
\(\displaystyle P_k= (x+1)^2 \*(y^2+x^2)
 + \left(x^4
 + k\*x^3
 + \left(-2\*k
 + 6\right) \*x^2
 + k\*x
 + 1\right) \*y
 \)\\
\(\displaystyle Q_k= (x+1)^3 \*(y^2+x)
 + \left(k\*x^3
 + \left(-2\*k
 + 16\right) \*x^2
 + k\*x\right) \*y
 \)\\
\(\displaystyle R_k= (x+1)^2 \*y^2
 + \left(2\*x^2
 + k\*x
 + 2\right) \*y
\)\\
 \(\displaystyle m(Q_k) = m(P_{k
 - 2}),k \ge 6 \)\\
\(\displaystyle m(R_k) = 1/2m(P_{k
 - 2}),0 \le k \le 4,\quad m(R_k) = m(P_{k
 - 2}),k \ge 5 \)\\
\(\displaystyle m(R_k) = m(Q_{k}),k \ge 6 \)\\
\hline
\(\displaystyle P_k=(x^2
 + x
 + 1)^{2}(y^2+x^2)+(x^6
 + k\*x^5
 + 4\*x^4
 + \left(-2\*k
 + 8\right) \*x^3
 + 4\*x^2
 + k\*x
 + 1)y\)\\
\(\displaystyle Q_k=(x
 + 1)^{2}(y^2+1)+(x^2
 + k\*x
 + 1)y\)\\
\(\displaystyle m(Q_k) = 1/2m(P_{k}),0 \le k \le 2\)\\
\hline
\(\displaystyle P_k=(y^2+x^6)+(x^6
 + k\*x^5
 - 2\*k\*x^3
 + k\*x
 + 1)y\)\\
\(\displaystyle Q_k=(y^2+x^2)+(x^2
 + k\*x
 + 1)y\)\\
\(\displaystyle m(Q_k) = 1/2m(P_{k}),-2 \le k \le 2\)\\
\hline
\end{tabular}
\label{tab:relationreciprocalg1}
\end{table}

\begin{example}[Relation valid for $k\in \R$] Most of the relations are valid for a semi-infinite interval. But we also find relations seem to be valid for $k\in \R$. Let $P_k, Q_k$ be
\begin{align*}
&(x^2
 + x
 + 1)^{2}(x^4
 - x^2
 + 1)(y^2+1)+(2\*x^8
 + 4\*x^7
 + k\*x^6
 + \left(-2\*k
 + 6\right) \*x^4
 + k\*x^2
 + 4\*x
 + 2)y,\\
&(x^2
 + 1)^{2}(x^2
 + x
 + 1)(y^2+1)+(2\*x^6
 + 2\*x^5
 + k\*x^4
 + \left(2\*k
 - 8\right) \*x^3
 + k\*x^2
 + 2\*x
 + 2)y
\end{align*}
respectively, then numerically we find $m(Q_k) \stackrel{?}{=} m(P_{k - 2})$ for $k\in \R$.
\end{example}

\section{Computing and comparing Mahler measures}\label{section:computingmethod}
In this section, we briefly describe how to compute Mahler measures and compare Mahler measures of different families in this article.

We use the standard method described in \cite[Section~3]{BM} to compute the Mahler measure of two-variable polynomials. If the polynomial is tempered reciprocal and of degree 2 in $y$. Let $x=e(t):=\exp(2\pi it)$ on $\abs{x}=1$. The two roots $y_1(t)$ and $y_2(t)$ of $P(e(t),y)$ are given by $(r(t)\pm\sqrt{r(t)^2-s(t)^2})/s(t)$ where $r(t)$ and $s(t)$ are trigonometric polynomials.
Suppose $\abs{y_1(t)}\ge 1$, then
$m(P)$ is the sum of $\int_{\a}^{\b}\log\abs{y_1(t)}dt$ where $[\a,\b]$ are intervals such that the discriminant is positive. Note that $r(t)$ is either positive or negative on each interval since otherwise if $r(t)=0$ the discriminant is negative. Hence we can take $y_1(t)=(r(t)+\sqrt{r(t)^2-s(t)^2})/s(t)$ if $r(t)$ is positive on the interval and $(r(t)-\sqrt{r(t)^2-s(t)^2})/s(t)$ otherwise.

If the polynomial is not reciprocal, there might be more than one $k$ with $\abs{y_k(e(t))} > 1$. In this case, we split the interval by the points $(x,y)$ on $\mathbb{T}^2$ such that $P(x,y)=0$. We can find these points by solving $P(x,y)=0$ and $P(1/x,1/y)=0$. Then in each piece the number of $k$ with $\abs{y_k(e(t))} > 1$ does not change. We integrate on each piece by adding all these $y_k(e(t))$.

Some of the genus 1 families have degree 3 in $y$. For example, we already see a family of this type in the second row of Table \ref{tab:quotientfamilies}. In this case, one combines the numerical solution of $P(e(t),y)=0$ with the numerical integration procedure \texttt{intnum} of PARI \cite{PARI2}.

In this article, we study families of polynomials defining genus 2 and 3 curves. But we also compare the Mahler measures of these families with the Mahler measures of families defining genus 1 curves. There are sixteen unique representatives of convex lattice polygons with a single interior lattice point (see \cite{RV99}) giving tempered genus 1 families. So we determine all the tempered genus 1 families with these Newton polygons by assigning coefficients to the lattice points.

To compare the Mahler measure of different families, we first compute the $j$-invariants of the genus 1 families and genus 1 factors of the genus 2 and 3 families. For the reciprocal families of genus 2 and 3, we only care about the genus 1 factor $Z_k/\langle \sigma \rangle$. The $j$-invariants of these factors are rational functions of $k$. Suppose the $j$-invariants of the genus 1 factors of two families $P_k(x,y)$ and $Q_k(x,y)$ are $j_1(k)$ and $j_2(k)$ respectively. Factorizing the numerator of $j_2(k_1)-j_2(k_2)$, if there are linear factors in the factorization, we find these factors are of the form $k_1\pm k_2 +c$ where $c$ is an integer. So $j_1(k_1)=j_2(k_2)$ when $k_1\pm k_2 +c=0$. This means the two genus 1 factors of curves given by $P_k(x,y)$ and $Q_{\mp(k+c)}(x,y)$ are isomorphic over $\C$ (not necessarily over $\Q$), we numerically compare the Mahler measure between $P_k(x,y)$ and $Q_{\mp(k+c)}(x,y)$ and find the rational relation between them by applying PARI's routine \texttt{bestappr} to the quotient of the Mahler measures of these two families. For the nonreciprocal families in Section \ref{subsection:nonreciprocal}, we find the rational linear relation between the Mahler measures of the families and two quotient families by applying PARI's routine \texttt{lindep}. Similarly, we can also compare the Mahler measure and the corresponding $L$-value by the same routine.

\section{Final remarks}
There are several problems for further study. The most immediate problem is to give a universal algorithm to prove the the conjectural relations between the Mahler measures of different families as in Section \ref{subsection:nonreciprocal} and \ref{subsection:familyrelation}. A possible approach is to extend the ideas of Lalin and Wu in \cite{LW} and the ``parallel lines'' method developed by Mellit in \cite{Me}.

Another direction would be to consider the Mahler measure of polynomials defining curves of genus greater than 3 and find relations between the Mahler measure of different families of polynomials.

It is also desirable to consider the Mahler measure of three variable polynomials and extend Boyd's numerical results in \cite{Boy06}.

\section{Acknowledgements}
The authors would like to thank David Boyd, Francois Brunault, Maltilde Lalin and Wadim Zudilin for very helpful conversations and/or correspondence.
The first author was supported by the Fundamental Research Funds for the Central Universities (No.\,GK201803007) and the National Natural Science Foundation of China (Grant No.\,11726606, and No.\,11801345). The second author was supported by the National Natural Science Foundation of China (Grant No.\,11726605, and No.\,11571163).


\begin{thebibliography}{00}

\bibitem{Be85}
A.~Beilinson.
\newblock Higher regulators and values of {$L$}-functions.
\newblock {\em J.Sov.Math.}, 30:2036--2070, 1985.


\bibitem{BZ16}
M. J. Bertin and W. Zudilin.
\newblock On the Mahler measure of a family of genus 2 curves.
\newblock {\em Math. Z.}, 283(3-4):1185-1193, 2016.

\bibitem{BZ17}
M. J. Bertin and W. Zudilin.
\newblock On the Mahler measure of hyperelliptic families.
\newblock {\em Ann. Math. Qu{$\acute{e}$}}, 41(1):199-211, 2017.

\bibitem{Bl78}
S.~Bloch.
\newblock {\em Higher regulators, algebraic {$K$}-theory, and zeta functions of
  elliptic curves}, volume~11 of {\em CRM Monograph Series}.
\newblock American Mathematical Society, Providence, RI, USA, 2000.

\bibitem{BG}
S.~Bloch and D.~Grayson.
\newblock {$K_2$} and {$L$}-functions of elliptic curves: {C}omputer
  calculations.
\newblock In {\em Applications of Algebraic {$K$}-theory to algebraic geometry
  and number theory}, volume~55 of {\em Contemporary Mathematics}, pages
  79--88. American Mathematical Society, Providence, RI, USA, 1986.

\bibitem{Bos04}
J. Bosman.
\newblock Boyd's conjecture for a family of genus 2 curves.
\newblock 2004, Thesis-Universiteit Utrecht.

\bibitem{Boy98}
D. Boyd.
\newblock Mahler's measure and special values of {$L$}-functions.
\newblock {\em Experiment. Math.}, 7(1):37-82, 1998.

\bibitem{Boy06}
D. Boyd.
\newblock Mahler's measure and $L$-functions of elliptic curves evaluated at $s=3$.
\newblock {\url{https://www.math.ubc.ca/~boyd/pnwnt2015.ed.pdf}}, 2006.

\bibitem{BM}
D. Boyd and M. Mossinghoff.
\newblock Smaill limit points of Mahler's measure.
\newblock {\em Experiment. Math.}, 14(4):403-414, 2005.

\bibitem{BV}
D. Boyd and F. Rodriguze-Villegas.
\newblock Mahler measure and the dilogarithm II.
\newblock {\em arXiv preprint math/0308041v2}, 2003.

\bibitem{Br}
F. Brunault.
\newblock On the Mahler measure associated to $X_1(13)$.
\newblock {\em arXiv preprint 1503.04631}, 2015.

\bibitem{Ch}
T. Chinburg.
\newblock Mahler measures and derivatives of $L$-functions at non-positive integers.
\newblock {\em Preprint}, 1984.

\bibitem{Den97}
C. Deninger.
\newblock Deligne Periods of mixed motives, $K$-theory and the entropy of certain {$\Z$}-actions
\newblock {\em J. Amer. Math. Soc.}, 10(2):259-281, 1997.

\bibitem{Jeu06}
T.~Dokchitser, R.~de~Jeu and D.~Zagier.
\newblock Numerical verification of {B}eilinson's conjecture for {$K_2$} of
  hyperelliptic curves.
\newblock {\em Compos. Math.}, 142:339--373, 2006.

\bibitem{La}
M. Lalin.
\newblock On a conjecture by Boyd.
\newblock {\em Int. J. Number. Theory.}, 6(3):705--711, 2010.

\bibitem{LR}
M. Lalin and M. Rogers.
\newblock Functional equations for Mahler measures of genus-one curves.
\newblock {\em Algebra Number Theory.}, 1(1):87--117, 2007.


\bibitem{LW}
M. Lalin and G. Wu.
\newblock Regulator proofs for Boyd's identities on genus 2 curves.
\newblock {\em Int. J. Number. Theory.}, to appear.

\bibitem{LC}
H. Liu and S. Chang.
\newblock $K_2$ of certain families of plane quartic curves.
\newblock {\em Proc. Amer. Math. Soc.},
146:2785--2796, 2018.


\bibitem{LJ}
H. Liu and R. de Jeu.
\newblock On $K_2$ of certain families of curves.
\newblock {\em Int. Math. Res. Notices.}, 2015:10929--10958, 2015.

\bibitem{LQGithub}
H. Liu and H. Qin.
\newblock Data for Mahler measure of polynomials defining genus 2 and 3 curves.
\newblock {\url{https://github.com/liuhangsnnu/mahler-measure-of-genus-2-and-3-curves}}.

\bibitem{Me}
A.Mellit.
\newblock Elliptic dilogarithms and parallel lines.
\newblock {\em 	J. Number Theory}, 204:1--24, 2019.

\bibitem{PARI2}
\newblock The PARI~Group, PARI/GP version {\tt 2.11.0}, Univ. Bordeaux, 2018,
\newblock \url{http://pari.math.u-bordeaux.fr/}.

\bibitem{RV99}
F. Rodriguez-Villegas.
\newblock Modular Mahler measures I.
\newblock In {\em Topics in Number Theory}, volume~467 of {\em Math. Appl.}, pages
  17--48. Kluwer Acad. Publ., Dordrecht, 1999.

\bibitem{RZ}
M. Rogers and W. Zudilin.
\newblock From $L$-series of elliptic curves to Mahler measures.
\newblock {\em Compos. Math.}, 148(2):385--414, 2012.

\bibitem{RZ14}
M. Rogers and W. Zudilin.
\newblock On the Mahler measure of $1+X +1/X+Y +1/Y$.
\newblock {\em Int. Math. Res. Notices.}, 2014(9):2305--2326, 2014.



\end{thebibliography}
\end{document}